\documentclass[11pt]{article}
\usepackage[T1]{fontenc}
\usepackage{lmodern}
\usepackage{microtype}
\usepackage[a4paper,margin=29mm]{geometry}
\usepackage{amsmath,amssymb,amsthm,mathtools}
\usepackage{xcolor}
\usepackage{hyperref}
\usepackage[nameinlink,capitalise,noabbrev]{cleveref}
\hypersetup{colorlinks=true,linkcolor=blue!45!black,citecolor=blue!45!black,
urlcolor=blue!55!black,pdftitle={Largest bulk gap of the complex Ginibre ensemble}}
\allowdisplaybreaks
\newtheorem{theorem}{Theorem}[section]
\newtheorem{proposition}[theorem]{Proposition}
\newtheorem{lemma}[theorem]{Lemma}
\newtheorem{corollary}[theorem]{Corollary}
\theoremstyle{remark}
\newtheorem{remark}[theorem]{Remark}
\newcommand{\C}{\mathbb C}
\newcommand{\R}{\mathbb R}
\newcommand{\Pp}{\mathbb P}
\newcommand{\dd}{\,\mathrm d}
\newcommand{\one}{\mathbf 1}
\newcommand{\Tr}{\operatorname{Tr}}
\newcommand{\erfc}{\operatorname{erfc}}
\newcommand{\Pois}{\operatorname{Pois}}
\newcommand{\Gum}{\operatorname{Gum}}
\title{\bfseries Largest bulk gap of the complex Ginibre ensemble}
\author{Philippe Moreillon}
\date{}

\begin{document}
\maketitle

\begin{abstract}
Let \(M_n(B)\) be the largest distance from an eigenvalue of an \(n\times n\)
complex Ginibre matrix, with entries of variance \(1/n\), lying in a fixed
bulk set \(B\) compactly contained in the unit disk and of planar area
\(|B|\), to its nearest other eigenvalue. Lopatto and Otto proved that $
\sqrt{n} M_n(B)/(4\log n)^{1/4} \to 1$
in probability. Here we prove that $\beta_n^{3/4}\bigl(\sqrt n\,M_n(B)-\beta_n^{1/4}\bigr)$
converges in distribution to a Gumbel random variable, and we determine
\(\beta_n\) explicitly.
\end{abstract}

\medskip
\noindent\textbf{Keywords.} Complex Ginibre ensemble; extreme spacing;
Gumbel law; reduced Palm process; hole probability.\par
\noindent\textbf{2020 Mathematics Subject Classification.}
Primary 60B20; secondary 60G70, 60G55.

\section{Introduction and main results}

\subsection{The complex Ginibre ensemble}

The complex Ginibre ensemble is a non-Hermitian random matrix model.
Introduced by Ginibre in 1965 \cite{Ginibre}, it consists of matrices with
independent centered complex Gaussian entries of variance \(1/n\).  We refer to the monograph of Byun
and Forrester \cite{ByunForrester} for an account of recent developments on the subject.  

The unordered complex eigenvalues \(z_1,\ldots,z_n\) have joint probability measure
\begin{equation}\label{eq:introjpdf}
 \frac{n^{n(n+1)/2}}{\pi^n\prod_{j=1}^n j!}
 \exp\!\left(-n\sum_{j=1}^n|z_j|^2\right)
 \prod_{1\le i<j\le n}|z_i-z_j|^2
 \prod_{j=1}^n\dd^2z_j.
\end{equation}
As is well-known, the associated point process is determinantal, and as $n\to\infty$, the empirical eigenvalue measure converges to the uniform probability
measure on the unit disk. Thus a typical bulk nearest-neighbour distance is of order
\(n^{-1/2}\).  The present paper concerns a substantially more delicate
question: how large can such a distance be among all eigenvalues lying
in a prescribed portion of the bulk?

\subsection{Extreme spacings in one and two dimensions}

The smallest gaps between nearest-neighbour eigenvalues arise from exceptionally
close pairs and therefore require detailed knowledge of the asymptotic behavior of the correlation
kernel. The largest gaps, by contrast, are created by exceptionally isolated eigenvalues and are governed by a large-deviation event: a disk centred at such an eigenvalue, with radius slightly larger than \(n^{-1/2}\), must contain no other eigenvalue. The precise large-\(n\) asymptotics of this hole probability were recently obtained in \cite{CharlierAnnuli}, and we will rely on this result.

The study of extreme eigenvalue spacings was initiated by Vinson \cite{V2001} and now has a substantial history, particularly in dimension one.  For the CUE and GUE, Ben Arous and Bourgade
\cite{BenArousBourgade} obtained the joint limiting laws of the smallest gaps
and the leading-order term of the largest gaps.  Soshnikov's earlier work
\cite{Soshnikov} established Poisson statistics for the smallest spacings of a
broad class of translation-invariant determinantal processes.  The largest
gaps were subsequently resolved at the fluctuation scale by Feng and Wei
\cite{FengWei}: after a precise centering, the largest gaps of the
CUE and GUE form a Poisson process and the ordered gaps have generalized
Gumbel laws.  More recently, Charlier \cite{CharlierLargest} obtained sharp
largest-gap asymptotics for general unitary-invariant Hermitian ensembles. 

In dimension two, the theory is much younger. Shi and Jiang
\cite{ShiJiang} proved that the smallest gaps of the complex Ginibre ensemble
are of order \(n^{-3/4}\) and converge, after
rescaling, to Poisson statistics. This result was extended by Charlier \cite{CharlierSmallest} to general random normal matrices. The same scale and limiting Poisson
statistics were obtained by Lopatto and Meeker \cite{LopattoMeeker} for the
non-real bulk eigenvalues of the real Ginibre ensemble. Related Poisson
limits for the smallest distances between zeros of Gaussian analytic
functions on compact Riemann surfaces, together with an analogue for the
planar Gaussian entire function, were established by Feng and Yao
\cite{FengYao}. On the opposite side of the spacing spectrum, Otto
\cite{Otto} proved a Poisson limit for points with unusually large
nearest-neighbour distance in the infinite Ginibre process observed through
expanding windows, and deduced the leading scale of the maximum. Lopatto and
Otto \cite{LopattoOtto} then initiated the study of the
largest bulk nearest-neighbour gap in the complex Ginibre ensemble.  Writing
\(M_n(B)\) for the largest nearest-neighbour distance among eigenvalues
anchored over a fixed bulk set \(B\), as made precise in \eqref{eq:Mn}, they
proved that
\begin{equation}\label{eq:LOleading}
 \frac{\sqrt{n}M_n(B)}{(4\log n)^{1/4}}\longrightarrow1
 \qquad\text{in probability}.
\end{equation}
Their work also provided a source of inspiration for the present work.

\subsection{Repulsion, hole probabilities, and the two extreme scales}

As mentioned earlier, the smallest and largest
nearest-neighbour distances in the Ginibre ensemble \eqref{eq:introjpdf} occupy the very different scales
\[
 n^{-3/4}
 \qquad\hbox{and}\qquad
 n^{-1/2}(4\log n)^{1/4},
\]
respectively, and obtaining precise information on the fluctuations of the largest gap requires asymptotics for the hole probability on a disk of radius larger than \(n^{-1/2}\).

Disk-hole probabilities for the Ginibre ensemble have a long history. Early contributions to this problem include Forrester \cite{ForresterHole} and Jancovici,
Lebowitz and Manificat \cite{JLM}. Adhikari and Reddy \cite{AdhikariReddy} and Charlier \cite{CharlierBalayage} studied hole probabilities for general shapes at leading order. For the much finer precision required here, Charlier's analysis of disk counting probabilities \cite{CharlierAnnuli} provides the precise expansion, including the order-one
term, that underlies the analytic part of the present work: as \(n\to\infty\) with \(r\) fixed, he obtained
\begin{align}
\mathbb{P}(\#\{|z_{j}|<r\}=0)
=
\exp\left(
C_1n^2+C_2n\log n+C_3n+C_4\sqrt n
+C_5\log n+C_6 +O(n^{-1/12})
\right),
\label{general circular Ginibre hole expansion}
\end{align}
where
\begin{align*}
& C_1=-\frac{r^4}{4},
\quad
C_2 =-\frac{r^2}{2}, \quad C_3=r^2\left(1-\log(r\sqrt{2\pi})\right), \\
& C_4=a_{1}r, \quad
C_5=\frac13, \quad C_6
=
a_{0}
+\frac23 \log r ,
\end{align*}
and \(a_{1},a_{0}\) are defined in \eqref{eq:a1} and \eqref{eq:a0}. We also mention that a result analogous to \eqref{general circular Ginibre hole expansion} for the spherical ensemble is available in \cite{BP2026}.

\subsection{The extreme-value mechanism and the contribution of this paper}

Our aim is to pass from the leading law \eqref{eq:LOleading} to a complete
extreme-value theorem.  We identify the canonical tail transform, prove a
Gumbel limit for the largest gap, and determine every deterministic term in
the fourth-power centering down to order one.  We also obtain the limiting
laws of all fixed extreme order statistics.  The answer is most naturally
stated in terms of the reduced-Palm hole probability of the infinite Ginibre
process. We write \(\xi_\infty^{0,!}\) for the process governed by the
reduced Palm distribution at the origin, with the distinguished Palm point
removed. If a point is fixed at the centre of a disk, the microscopic event
that its nearest neighbour lies farther than \(R\) is
\[
 q(R)=\Pp\{\xi_\infty^{0,!}(D_R)=0\}.
\]
Consequently the correct analogue of the exponential tail transform for
independent random variables is \(\Psi(R)=-\log q(R)\). At a physical radius
\(r\), the expected number of anchors with an empty surrounding disk is
asymptotic to
\[
 \frac{n|B|}{\pi}q(\sqrt n\,r).
\]
Choosing \(r\) by
\(\Psi(\sqrt n\,r)=\log(n|B|/\pi)+x\) therefore makes this mean tend
to \(e^{-x}\), suggesting both the Poisson law for exceedances and the Gumbel
law for the maximum.  Theorem~\ref{thm:main} makes this heuristic exact.

Two features are specific to the required precision.  First, the disk is
\emph{anchored at an eigenvalue}.  The relevant quantity is therefore the
reduced-Palm hole probability \(q\), rather than the ordinary Ginibre hole
probability \(H\).  For Ginibre the exact identity
\[
 q(R)=e^{R^2}H(R)
\]
shows that confusing the two leaves the leading \(R^4/4\) term unchanged but
alters the entire \(R^2\) correction. It is thus invisible in
\eqref{eq:LOleading} and fatal at Gumbel accuracy.  Second, one needs the
large-hole expansion with an \(o(1)\) error after inserting the growing
microscopic radius \(R\asymp(\log n)^{1/4}\). A fixed-parameter asymptotic
cannot simply be evaluated along this regime.  We derive the necessary
compact-uniform form of Charlier's disk-hole analysis
\cite{CharlierAnnuli}, and then compare the finite and infinite reduced-Palm
determinants with exponentially small error.

The result exhibits a useful dimensional contrast.  In the sine process,
the logarithm of a large interval probability is quadratic in its length;
this produces largest one-dimensional bulk gaps on the scale
\(\sqrt{\log n}\) times the typical spacing.  For the infinite Ginibre
process, the logarithm of a large disk probability begins with \(-R^4/4\);
the maximal nearest-neighbour distance is therefore of order
\((\log n)^{1/4}\) times the typical two-dimensional spacing.  Beyond this
leading exponent, the terms \(R^2\log R\), \(R^2\), \(R\), \(\log R\), and
the constant term all survive, after inversion, at successively smaller but
still deterministic orders.  Keeping this entire hierarchy is what yields
the explicit centering in Theorem~\ref{thm:explicit}.

Let us summarize the main contributions of the paper.  First, we express the
limiting law in the exact tail coordinate \(\Psi\), which removes all
ambiguity from the normalization.  Second, we prove a sharp expansion of the
Ginibre reduced-Palm hole probability, including explicit convergent integral
formulae for the linear and constant coefficients.  Third, we transfer the
adaptive-radius Poisson approximation of \cite{LopattoOtto} to the single
deterministic radius selected by \(q\), uniformly throughout a fixed bulk set.
Finally, we invert the hole expansion and obtain a fourth-power normalization
whose deterministic centering is accurate through order one.

The exact tail coordinate also clarifies which part of the argument is
probabilistic and which part is analytic.  The probabilistic input asserts
that eigenvalues isolated at an adaptively chosen radius form, asymptotically,
a Poisson process.  The analytic input identifies that adaptive radius with a
single deterministic radius, uniformly over the prescribed bulk set, and
then evaluates it sharply.  The latter step requires substantially more than
the leading large-hole rate. An error of size \(o(R^2)\), for example, is
enough to recover \eqref{eq:LOleading} but is much too large to locate a
Gumbel fluctuation, whose change in the logarithmic tail is of order one.
Our expansion retains all terms that remain visible after this inversion.

\subsection{Organization and notation}

The paper is organized as follows.  The remainder of this section fixes the
normalization and states the principal results.  Section~2 recalls the finite
and infinite Ginibre kernels, identifies the reduced-Palm product, and records
the Poisson input.  Section~3 proves the full reduced-Palm hole expansion.
Section~4 compares the finite and infinite reduced-Palm determinants uniformly
for physical radii \(r=O(n^{-1/2}(\log n)^{1/4})\). Section~5 passes from the adaptive Poisson theorem
to a common deterministic threshold and inverts the tail asymptotics.  The
compact-uniform incomplete-gamma summation is proved in Appendix~A.

Let \(X_n=(x_{ij})_{i,j=1}^n\), where the \(x_{ij}\) are independent centered
complex Gaussian variables of variance \(1/n\), each with density
\((n/\pi)e^{-n|z|^2}\,\mathrm d^2z\), and write \(\xi_n\) for its eigenvalue
point process. Thus the circular-law droplet is the unit disk, and the
bulk intensity in these coordinates is asymptotic to \(n/\pi\). For \(a\in\C\) and
\(r\ge0\), write
\[
 D_r(a)=\{z\in\C:|z-a|<r\},\qquad D_r=D_r(0),
\]
and write \(|A|\) for the planar Lebesgue measure of a Borel set \(A\).
Throughout,
\(B\subset\C\) is a fixed bounded Borel set such that
\begin{equation}\label{eq:Bassumption}
 0<|B|<\infty,\qquad |\partial B|=0,
 \qquad \overline B\subset\{z:|z|<1\}.
\end{equation}
Define the anchored nearest-neighbour gaps and their maximum by
\begin{equation}\label{eq:Mn}
 d_n(z)=\min_{w\in\xi_n\setminus\{z\}}|z-w|,
 \qquad
 M_n(B)=\max_{z\in\xi_n\cap B}d_n(z),
\end{equation}
with the maximum over the empty set defined as \(0\). Neighbours are
taken from the full spectrum, not merely from \(B\).

Let \(\xi_\infty\) denote the infinite Ginibre process of intensity
\(1/\pi\), and let \(\xi_\infty^{0,!}\) denote its reduced Palm process at
the origin, with the distinguished Palm point removed. The probability that a
point fixed at the origin has no other point within distance \(r\) is
\begin{equation}\label{eq:qdefinition}
 q(r):=\Pp\{\xi_\infty^{0,!}(D_r)=0\}
 =\prod_{k=2}^{\infty}\frac{\Gamma(k,r^2)}{\Gamma(k)},
 \qquad
 \Gamma(k,x)=\int_x^\infty t^{k-1}e^{-t}\dd t.
\end{equation}
The product converges locally uniformly, and \(q\) is continuous and
strictly decreasing from \(1\) to \(0\); see \Cref{lem:palmproduct}.
Consequently
\[
 \Psi(r):=-\log q(r),\qquad r\ge0,
\]
is a continuous strictly increasing bijection of \([0,\infty)\) onto
itself.

\begin{theorem}[Exact tail-transform normalization]\label{thm:main}
Under \eqref{eq:Bassumption}, for every \(x\in\R\),
\begin{equation}\label{eq:mainlimit}
 \lim_{n\to\infty}
 \Pp\!\left\{
   \Psi(\sqrt n\,M_n(B))-\log\frac{n|B|}{\pi}\le x
 \right\}
 =\exp\{-e^{-x}\}.
\end{equation}
\end{theorem}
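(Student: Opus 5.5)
Since \(\Psi\) is a continuous increasing bijection, the event in \eqref{eq:mainlimit} equals \(\{M_n(B)\le r_n(x)\}\), where the deterministic physical radius \(r_n=r_n(x)\) is defined by
\[
 \Psi(\sqrt n\,r_n)=\log\frac{n|B|}{\pi}+x.
\]
Let \(N_n(r)\) be the number of eigenvalues \(z\in\xi_n\cap B\) with \(d_n(z)>r\). Then \(\{M_n(B)\le r\}=\{N_n(r)=0\}\). It therefore suffices to show that \(N_n(r_n)\) converges in distribution to \(\Pois(e^{-x})\).

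The plan is to prove this in three steps, stated relative to the Poisson input recorded in Section~2, which we take from \cite{LopattoOtto}. That input is an adaptive-radius statement: if radii \(r_n(w)\) are chosen, locally in \(w\), so that the first intensity of isolated anchors integrates to \(\lambda\), then the exceedance count converges to \(\Pois(\lambda)\). The theorem follows once we show that our single deterministic radius \(r_n\) is admissible, with limiting mean \(e^{-x}\).

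\textbf{First moment.} By the Palm/Campbell formula for the determinantal process \(\xi_n\),
\[
 \mathbb E N_n(r)=\int_B \rho_n(w)\,\Pp\{\xi_n^{w,!}(D_r(w))=0\}\dd^2 w,
\]
with \(\rho_n(w)\sim n/\pi\) uniformly on \(B\). The reduced Palm process of \(\xi_n\) at \(w\) is again determinantal. By rotation and translation covariance of the Ginibre kernel, after the magnetic gauge change, its gap probability on \(D_r(w)\) is a Fredholm determinant. For \(w=0\) this determinant is the finite product \(\prod_{k=2}^{n}\gamma\)-type ratios, the finite analogue of \eqref{eq:qdefinition}.

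The key step, and the main obstacle, is the \textbf{uniform comparison}
\[
 \Pp\{\xi_n^{w,!}(D_r(w))=0\}=q(\sqrt n\,r)\bigl(1+o(1)\bigr),
\]
uniformly in \(w\in\overline B\) and in \(r\le Cn^{-1/2}(\log n)^{1/4}\). Here a multiplicative \(1+o(1)\) error is genuinely needed, since \(q\) itself is of size \(n^{-1}\). I would attack this as follows:
\begin{itemize}
 \item \emph{Centred anchor.} For \(w=0\), compare the finite product with the infinite one factor by factor. Use incomplete-gamma tail bounds to show that the factors with \(k\ge n^{1/2}\) contribute \(1+O(e^{-cn})\), and that truncating the infinite product at \(n\) costs only an exponentially small error.
 \item \emph{Off-centre anchor.} For \(w\neq0\), write the Palm kernel as the infinite Palm kernel plus a perturbation that is exponentially small on \(D_r(w)\) as long as \(|w|\le 1-\delta\). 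Control the ratio of Fredholm determinants through the trace norm of the perturbation times the resolvent. The danger is the conditioning of the near-empty determinant: the inverse of \(I-K\) on \(D_R\) has norm roughly \(e^{R^2}\). This is only polynomial in \(n\) in our regime, so the exponentially small perturbation still wins. I expect this estimate to be where the compact-uniform form of Charlier's analysis \cite{CharlierAnnuli} (Appendix~A) is used.
\end{itemize}
Granting the comparison, \(\mathbb E N_n(r_n)\to \frac{|B|n}{\pi}\cdot\frac{\pi}{n|B|}e^{-x}=e^{-x}\).

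\textbf{Poisson limit.} Apply the adaptive-radius Poisson theorem of Section~2 with the radii \(r_n(w)\equiv r_n\). By the uniformity above, the local intensity \(\rho_n(w)\Pp\{\xi_n^{w,!}(D_{r_n}(w))=0\}\) is \((e^{-x}/|B|)(1+o(1))\) uniformly on \(B\). This places us within its hypotheses: the adaptive radius and \(r_n\) differ by a factor \(1+o(1)\) in the tail coordinate, so the expected counts agree up to \(o(1)\). Any higher-correlation or decoupling conditions are inherited, because \(r_n\asymp n^{-1/2}(\log n)^{1/4}\) is exactly the regime treated there. Hence \(N_n(r_n)\Rightarrow\Pois(e^{-x})\), and \(\Pp\{N_n(r_n)=0\}\to\exp\{-e^{-x}\}\), which is \eqref{eq:mainlimit}.

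The boundary condition \(|\partial B|=0\) is used to approximate \(B\) by finite unions of small squares, on which the local uniformity is applied.
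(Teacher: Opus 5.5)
Your reduction to $\{N_n(r_n)=0\}$ and your plan for the uniform relative comparison match the paper's \Cref{lem:abstractdetcomparison,lem:palmtailkernel,lem:palmcompare}: the perturbation is exponentially small in trace norm, and the resolvent norm is only $e^{nr^2}/(1+nr^2)$. Two side remarks. First, that comparison uses no input from Charlier's analysis. The sharp expansion enters the proof only to show $r_n=O(n^{-1/2}(\log n)^{1/4})$, which you assert without proof. A crude bound $\Psi(R)\ge cR^4$, read off from the factors $k\le R^2/2$ in \eqref{eq:qdefinition}, would suffice. Second, your ``centred anchor'' case is redundant, since the perturbation argument covers $a=0$ as well.

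The genuine gap is the last step. \Cref{prop:LO} concerns only the exact adaptive radii $R_{n,\kappa}(a)$ solving \eqref{eq:adaptive}. There the local intensity equals $\kappa$ at every anchor; it does not merely integrate to $\lambda$, as in your paraphrase. A constant radius whose intensity is $\kappa(1+o(1))$ is therefore not within its hypotheses, so ``apply it with $r_n(w)\equiv r_n$'' is not licensed. Agreement of expected counts up to $o(1)$ does not transfer a distributional limit. Saying the decoupling conditions are ``inherited'' amounts to re-running the internal proof of the cited theorem, which its statement does not allow.

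The missing ingredient is a comparison of the counts themselves, using the strict monotonicity of $r\mapsto q_n(a,r)$ (\Cref{lem:qnmonotone}); your proposal never uses this. The paper argues as follows. Fix $\varepsilon$. Your uniform comparison gives $\kappa(1-\varepsilon)<K_n(a,a)q_n(a,r_n)<\kappa(1+\varepsilon)$ for all $a\in B$ and large $n$. Monotonicity then yields $R_{n,\kappa(1+\varepsilon)}(a)\le r_n\le R_{n,\kappa(1-\varepsilon)}(a)$. Hence, pathwise, $N_{n,\kappa(1-\varepsilon)}(B)\le N_n(r_n)\le N_{n,\kappa(1+\varepsilon)}(B)$. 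Apply \Cref{prop:LO} to both outer counts and let $\varepsilon\downarrow0$.

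An equivalent repair uses Campbell--Palm together with monotonicity: $\mathbb E|N_n(r_n)-N_{n,\kappa}(B)|\le\int_B|K_n(a,a)q_n(a,r_n)-\kappa|\dd^2a\to0$, because the symmetric difference consists of anchors whose nearest-neighbour distance lies between $r_n$ and $R_{n,\kappa}(a)$. Either way, \Cref{prop:LO} is applied directly to $A=B$, which is allowed since $\overline B\subset D_1$. So your approximation of $B$ by small squares is unnecessary.
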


To extract an explicit centering from \Cref{thm:main}, we first determine the
reduced-Palm hole probability sharply. Set
\begin{align}
 a_2&=2-\frac12\log(2\pi),\label{eq:a2}\\
 a_1&=\sqrt2\left\{
 \int_{-\infty}^{0}\log\!\left(\frac{\erfc y}{2}\right)\dd y
 +\int_0^\infty\left[
 \log\!\left(\frac{\erfc y}{2}\right)+y^2+\log y
 +\log(2\sqrt\pi)\right]\dd y\right\},\label{eq:a1}
\end{align}
and define
\begin{align}
 J_-={}&\int_{-\infty}^{0}\left[
 2y\log\!\left(\frac{\erfc y}{2}\right)
 +\frac{e^{-y^2}(1-5y^2)}{3\sqrt\pi\,\erfc y}\right]\dd y,\label{eq:Jminus}\\
 J_+={}&\int_{0}^{\infty}\left[
 2y\log\!\left(\frac{\erfc y}{2}\right)
 +\frac{e^{-y^2}(1-5y^2)}{3\sqrt\pi\,\erfc y}
 +\frac{11}{3}y^3+2y\log y
 +\left(\frac12+2\log(2\sqrt\pi)\right)y\right]\dd y,\label{eq:Jplus}\\
 a_0={}&\frac14\log(2\pi)-\zeta'(-1)-2J_--2J_+ .\label{eq:a0}
\end{align}
Here \(\erfc y=(2/\sqrt\pi)\int_y^\infty e^{-t^2}\,\mathrm dt\), and
\(\zeta\) denotes the Riemann zeta function. All four integrals converge
absolutely after the displayed renormalizations. Numerical quadrature of
these convergent integrals gives
\[
 a_1=-2.3014159133\ldots,\qquad a_0=1.0063109967\ldots.
\]

\begin{theorem}[Sharp reduced-Palm hole asymptotics]\label{thm:palmexpansion}
As \(r\to\infty\),
\begin{equation}\label{eq:palmexpansion}
 \log q(r)=-\frac{r^4}{4}-r^2\log r+a_2r^2+a_1r
 +\frac23\log r+a_0+O(r^{-1/6}).
\end{equation}
\end{theorem}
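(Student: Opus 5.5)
Throughout, $\log q(r)=\sum_{k\ge2}\log Q(k,r^2)$, where $Q(k,x)=\Gamma(k,x)/\Gamma(k)$ is the regularized upper incomplete gamma function, and we set $x=r^2$. The plan is a direct asymptotic analysis of this sum, splitting the index range around the transition point $k\approx x$ and using the uniform (Temme) asymptotics of $Q(k,x)$. An alternative route is to use the identity $q(r)=e^{r^2}H(r)$ with the infinite-Ginibre hole probability $H(r)=\prod_{k\ge1}Q(k,r^2)$. Indeed $\log Q(1,x)=-x$, so $\log q=\log H+r^2$. One could then import the expansion of $\log H$ by taking the $n\to\infty$ limit of Charlier's finite-$n$ expansion \eqref{general circular Ginibre hole expansion} at radius $r/\sqrt n$. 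But that expansion holds only for fixed physical radius, so this rescaling is not legitimate as stated. I would therefore redo the summation directly; this is also what the appendix's compact-uniform incomplete-gamma summation is designed for.

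\textbf{Step 1 (three regimes).} Split the sum into three ranges.
\begin{itemize}
\item For $k\le x-Mx^{1/2}\sqrt{\log x}$, $Q(k,x)$ is exponentially small. Write $\log Q(k,x)=\log\frac{x^{k-1}e^{-x}}{\Gamma(k)}+\log\bigl(1+\text{correction}\bigr)$ using the asymptotic series $\Gamma(k,x)\sim x^{k-1}e^{-x}\sum_j (k-1)_j^{\downarrow}x^{-j}$. The first term is summed exactly via Stirling and Barnes $G$-function asymptotics; this produces $-r^4/4-r^2\log r+a_2r^2$, part of $\frac23\log r$, and the $-\zeta'(-1)$ and $\frac14\log(2\pi)$ pieces of $a_0$.
\item For $k\ge x+Mx^{1/2}\sqrt{\log x}$, $\log Q(k,x)$ is negligible; it sums to $O(x^{-N})$.
\item In the transition window $k=x+\sqrt{2x}\,y$, use Temme's expansion $Q(k,x)=\tfrac12\erfc\bigl(-\eta\sqrt{k/2}\bigr)+R_k(\eta)$, with $\eta$ given explicitly in terms of $k/x$. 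Expand around $\eta\approx -y\sqrt{2/k}$ to get $\log Q=\log(\tfrac12\erfc y)+x^{-1/2}\,\phi_1(y)+x^{-1}\phi_2(y)+\dots$, with explicit correction functions matching the integrands of \eqref{eq:Jminus}–\eqref{eq:Jplus}.
\end{itemize}
I will also adopt the sign convention in which the Palm hole suppresses small $k$, so that $y\to+\infty$ corresponds to the exponentially small side; this is consistent with the renormalization $+y^2+\log y$ in \eqref{eq:a1}.

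\textbf{Step 2 (Euler–Maclaurin and matching).} Replace the window sum by $\sqrt{2x}\int(\cdot)\,dy$ plus Euler–Maclaurin boundary corrections. The leading integral $\int\log(\tfrac12\erfc y)\,dy$ diverges on one side like $-y^2-\log y-\log(2\sqrt\pi)$, which is exactly the exponentially-small-regime asymptotics. Subtracting these counterterms in the window and adding them back in the outer-regime sum gives a convergent integral. This integral times $\sqrt{2x}=\sqrt2\,r$ yields the term $a_1 r$. The next order, carrying $x^{-1/2}\cdot\sqrt{x}=O(1)$ weight, gives $-2J_\pm$ after the analogous renormalization by $\tfrac{11}{3}y^3+2y\log y+\dots$. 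The boundary terms from Euler–Maclaurin and the matching contribute the remaining $\log r$ and constant pieces. All cutoff-dependent $M$-terms must cancel between regimes, and checking this cancellation verifies the bookkeeping.

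\textbf{Step 3 (error control).} The $O(r^{-1/6})$ error comes from balancing the window width (the cutoff at $|y|\lesssim x^{\delta}$) against the Temme remainder and the third-order terms. A contribution of order $\sqrt{x}\,x^{-1}\,|y|^{p}$ integrated over $|y|\le x^{\delta}$ is small only if $\delta$ is small, whereas the tail matching needs $y$ large; optimizing gives a power like $x^{-1/12}=r^{-1/6}$, consistent with Charlier's $n^{-1/12}$. The main obstacle is this uniform control of the remainders in Temme's expansion across the whole window, including the $\log$ of $\erfc$ where it is tiny ($y\gg1$). There I would work with $\log\erfc$ through its own asymptotic expansion, so that relative rather than absolute errors are estimated. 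I expect to rely on the compact-uniform incomplete-gamma summation of Appendix~A for precisely this.
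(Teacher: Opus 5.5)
The gap is the reduction you discard at the outset. Your objection applies only to the naive version (fixed $r$, physical radius $r/\sqrt n\to0$). The paper instead lets the matrix size grow with $x=r^2$. Put $N=\lceil 4x\rceil$ and $t=\sqrt{x/N}$. Then $Nt^2=x$ exactly, and $t$ stays in a fixed compact subinterval of $(0,1)$ (it tends to $1/2$). For $k>N\ge 4x$ one has $1-Q(k,x)=\Pp\{\Pois(x)\ge k\}\le e^{-x}(ex/k)^k$. So the factors with $k>N$ contribute $O(e^{-cx})$ to $\log H(r)$, and what remains is exactly the finite disk-hole sum $\sum_{k=1}^N\log Q(k,Nt^2)$.

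\Cref{lem:uniformdisk} evaluates this sum with error $O(N^{-1/12})=O(r^{-1/6})$. Its uniformity in $t$ is precisely what is needed, since $t$ moves with $x$. The $N$-dependence then cancels identically via $t\sqrt N=\sqrt x$, $-\frac x2\log N-x\log t=-\frac x2\log x$ and $\frac13\log N+\frac23\log t=\frac13\log x$. Finally, $\log q=\log H+x$ turns $1-\frac12\log(2\pi)$ into $a_2$, and that is the whole proof. Your closing appeal to Appendix~A can only be realized through this same device. Everything proved there concerns $\sum_{k\le N}\log Q(k,Nt^2)$ with $t$ in a compact set, so using it for the infinite product forces you to introduce an auxiliary $N\asymp x$ anyway.

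Taken on its own, a direct summation of the infinite product is feasible in principle; it is Charlier's analysis with the top cutoff removed. But your outline does not carry it out, and the content of the theorem is exactly the constants. You assert, rather than derive, that the window corrections reproduce the integrands of \eqref{eq:Jminus}--\eqref{eq:Jplus}, and that all endpoint and matching pieces assemble into $a_0=\frac14\log(2\pi)-\zeta'(-1)-2J_--2J_+$. Several steps would also fail as written:

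\begin{itemize}
\item The factor $1+\text{correction}=\sum_{j\ge0}(k-1)^{\downarrow}_j x^{-j}$ is not a small perturbation; it is approximately $x/(x-k)$. The sum $\sum_k\log\frac{x}{x-k}$ contributes $+x$, so the Stirling/Barnes term alone yields $(a_2-1)r^2$, not $a_2r^2$. That is exactly the order at which the theorem is sensitive.
\item With the outer regime ending at $x-k\asymp M\sqrt{x\log x}$, the $j$-th term of the expansion of $\log Q$ in powers of $x/(x-k)^2$ sums to order $\sqrt x\,(M^2\log x)^{1/2-j}$. Truncating at any fixed order therefore leaves an error of size $\sqrt x$ times a power of $1/\log x$. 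The split must instead be at polynomial distance $x-k\ge x^{1/2+\delta}$, as your Step~3 tacitly assumes. Alternatively, as in Appendix~A, the simple expansion must be confined to $k\le x/(1+\epsilon)$, with a Temme-based transition block and explicit matching ranges in between.
\item The parametrization $k=x+\sqrt{2x}\,y$ places the exponentially small side at $y\to-\infty$. The variable consistent with $Q(k,x)\approx\frac12\erfc y$ is $y\approx(x-k)/\sqrt{2x}$.
\end{itemize}
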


Inverting \eqref{eq:palmexpansion} yields an explicit fourth-power centering.
For all sufficiently large \(n\), define
\begin{align}
 \beta_n={}&4\log\frac{n|B|}{\pi}
 -4\sqrt{\log\frac{n|B|}{\pi}}\,
   \log\!\left(2\sqrt{\log\frac{n|B|}{\pi}}\right)
 +8a_2\sqrt{\log\frac{n|B|}{\pi}}\notag\\
 &+4\sqrt2\,a_1\left(\log\frac{n|B|}{\pi}\right)^{1/4}
 +2\left[\log\!\left(2\sqrt{\log\frac{n|B|}{\pi}}\right)\right]^2
 \notag\\
 &+\left(\frac{10}{3}-8a_2\right)
   \log\!\left(2\sqrt{\log\frac{n|B|}{\pi}}\right)
 +8a_2^2-4a_2+4a_0.\label{eq:beta}
\end{align}

\begin{theorem}[Explicit fourth-power normalization]
\label{thm:explicit}
Under \eqref{eq:Bassumption}, for every \(x\in\R\),
\begin{equation}\label{eq:fourth}
 \Pp\{n^2M_n(B)^4\le\beta_n+4x\}\longrightarrow e^{-e^{-x}}.
\end{equation}
Equivalently,
\begin{equation}\label{eq:affine}
 \beta_n^{3/4}\bigl(\sqrt n\,M_n(B)-\beta_n^{1/4}\bigr)
 \ \Longrightarrow\ \Gum.
\end{equation}
\end{theorem}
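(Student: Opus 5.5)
Theorem~\ref{thm:explicit} follows from Theorem~\ref{thm:main} by inverting the expansion of Theorem~\ref{thm:palmexpansion}. Write \(L_n=\log(n|B|/\pi)\) and fix \(x\in\R\). Let \(R_n(x)\ge0\) be defined by \(R_n(x)^4=\beta_n+4x\); this is possible for large \(n\), since \(\beta_n\sim4L_n\). Because \(\Psi\) is a continuous strictly increasing bijection,
\[
 \{n^2M_n(B)^4\le\beta_n+4x\}=\{\Psi(\sqrt n\,M_n(B))-L_n\le \Psi(R_n(x))-L_n\}.
\]
The whole matter is therefore to prove
\begin{equation*}
 \Psi(R_n(x))-L_n\longrightarrow x .
\end{equation*}
Once this holds, fix \(\varepsilon>0\). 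For large \(n\), the event above lies between the events with thresholds \(x\pm\varepsilon\). Theorem~\ref{thm:main} and the continuity of \(x\mapsto e^{-e^{-x}}\) then give \eqref{eq:fourth} after letting \(\varepsilon\downarrow0\).

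To prove the displayed convergence, I insert \(R=R_n(x)\to\infty\) into \eqref{eq:palmexpansion}:
\[
 \Psi(R)=\frac{R^4}{4}+R^2\log R-a_2R^2-a_1R-\frac23\log R-a_0+O(R^{-1/6}).
\]
I then expand each term in powers of \(L=L_n\), using \(R^4=4L+\delta\), where \(\delta=\beta_n-4L+4x=O(\sqrt L\log L)\). Set \(\ell=\log(2\sqrt L)\). Then
\[
 R^2=2\sqrt L\Bigl(1+\frac{\delta}{8L}-\frac{\delta^2}{128L^2}+\cdots\Bigr),
 \qquad \log R=\frac12\ell+\frac{\delta}{16L}+O\!\Bigl(\frac{\log^2L}{L}\Bigr).
\]
I collect all contributions down to \(o(1)\), keeping \(\delta\) symbolic.
\begin{itemize}
\item \(R^4/4=L+\delta/4\).
\item \(R^2\log R=\sqrt L\,\ell+\frac{\delta\ell}{8\sqrt L}+\frac{\delta}{8\sqrt L}+o(1)\). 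The \(\delta^2\) correction is \(O(L^{-1/2}\log^3L)\), which is negligible.
\item \(a_2R^2=2a_2\sqrt L+\frac{a_2\delta}{4\sqrt L}+o(1)\).
\item \(a_1R=\sqrt2\,a_1L^{1/4}+o(1)\).
\item \(\frac23\log R=\frac13\ell+o(1)\).
\end{itemize}
Writing \(\delta=-4\sqrt L\,\ell+8a_2\sqrt L+\eta\), with \(\eta=O(\log^2L)\), the terms of order \(\sqrt L\,\ell\), \(\sqrt L\) and \(L^{1/4}\) cancel against the first three terms of \(\beta_n\). This is exactly how those terms were chosen. In the remaining \(O(\log^2L)\) and \(O(\log L)\) bookkeeping, \(\delta/(8\sqrt L)\) and \(\delta\ell/(8\sqrt L)\) contribute \(-\ell/2+a_2\), \(-\ell^2/2+a_2\ell\) and so on. Matching these against \(\frac14\bigl(2\ell^2+(\frac{10}3-8a_2)\ell+8a_2^2-4a_2+4a_0\bigr)\) should leave exactly \(x+o(1)\). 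I will verify each coefficient, namely those of \(\ell^2\), \(\ell\) and the constant, separately.

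The main obstacle is error control rather than any conceptual step. The cross terms of the form \(\delta\cdot\ell/\sqrt L\) are of order \(\log^2L\), so I must track \(\delta\) through second order in the expansions of \(R^2\) and \(\log R\). I also need to confirm that every discarded remainder, such as \(\delta^2\ell/L^{3/2}\) and \(\delta/L^{3/4}\) coming from \(a_1R\), is \(o(1)\). Finally, the \(O(R^{-1/6})\) remainder of Theorem~\ref{thm:palmexpansion} is \(O(L^{-1/24})\), which is harmless.

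Equivalence with \eqref{eq:affine} is elementary. Let \(u_n=\beta_n^{3/4}(\sqrt n M_n-\beta_n^{1/4})\). Then \(n^2M_n^4=(\beta_n^{1/4}+u_n\beta_n^{-3/4})^4=\beta_n+4u_n+O(u_n^2/\beta_n)\). On events where \(u_n\) stays bounded, \(n^2M_n^4\le\beta_n+4x\) is thus equivalent to \(u_n\le x+o(1)\). Tightness of \(u_n\) follows from \eqref{eq:fourth} itself, and a sandwich argument as before converts one limit statement into the other.
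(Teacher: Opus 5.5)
Your proposal is correct and follows essentially the paper's route. You reduce, via monotonicity of $\Psi$ and the sandwich (moving-argument) step, to $\Psi\bigl((\beta_n+4x)^{1/4}\bigr)=L_n+x+o(1)$, verify this by inserting Theorem~\ref{thm:palmexpansion}, and obtain the affine form from tightness and the difference of fourth powers. The one difference is bookkeeping: the paper expands $\sqrt{\beta_n+4x}=u+h$ to order $1/u$ in Lemma~\ref{lem:inversion}, whereas keeping $R^4/4=L+\delta/4$ exact, as you do, is slightly lighter.

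Two small repairs are needed. First, $\eta$ is not $O(\log^2L)$, because $\delta$ also contains $4\sqrt2\,a_1L^{1/4}$; however, $\eta=O(L^{1/4})$ suffices for every $o(1)$ claim you make. Second, your deferred coefficient check does close. The $\ell^2$ coefficients give $\tfrac12-\tfrac12=0$, the $\ell$ coefficients give $(\tfrac56-2a_2)+a_2-\tfrac12+a_2-\tfrac13=0$, and the constants give $(2a_2^2-a_2+a_0+x)+a_2-2a_2^2-a_0=x$.
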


The square of the microscopic centering radius \(\beta_n^{1/4}\) for
\(\sqrt n\,M_n(B)\) in \eqref{eq:affine} has the simpler
large-\(n\) expansion
\[
 \beta_n^{1/2}
 =2\sqrt{\log\frac{n|B|}{\pi}}
  -\frac12\log\log\frac{n|B|}{\pi}
  +4-\log(4\pi)+o(1).
\]
Consequently the largest gap has the readable location formula
\begin{equation}\label{eq:simplecentering}
 nM_n(B)^2
 =2\sqrt{\log\frac{n|B|}{\pi}}
  -\frac12\log\log\frac{n|B|}{\pi}
  +4-\log(4\pi)+o_{\Pp}(1).
\end{equation}
Formula \eqref{eq:beta} retains every deterministic correction through order
one. In decreasing order, their sizes are
\[
 \sqrt{\log n}\log\log n,\quad \sqrt{\log n},\quad(\log n)^{1/4},
 \quad(\log\log n)^2,\quad\log\log n,\quad1.
\]

\begin{corollary}[Fixed order statistics]\label{cor:kth}
Arrange the values \(\{d_n(z):z\in\xi_n\cap B\}\) in decreasing order
as \(M_{n,1}\ge M_{n,2}\ge\cdots\), and put \(M_{n,k}=0\) when fewer than
\(k\) anchors lie in \(B\). For each fixed \(k\ge1\),
\[
 \Pp\!\left\{
   \Psi(\sqrt n\,M_{n,k})-\log\frac{n|B|}{\pi}\le x
 \right\}
 \longrightarrow e^{-e^{-x}}\sum_{j=0}^{k-1}\frac{e^{-jx}}{j!}.
\]
\end{corollary}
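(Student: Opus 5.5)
The corollary should follow from the same Poisson approximation that underlies \Cref{thm:main}. I would first record the point-process statement that the proof of \Cref{thm:main} actually delivers, not just its consequence for the maximum. Fix \(x\in\R\) and let \(r_n(x)\) be the deterministic physical radius defined by
\[
\Psi(\sqrt n\,r_n(x))=\log\frac{n|B|}{\pi}+x .
\]
This is well defined and unique because \(\Psi\) is a continuous increasing bijection. Let
\[
N_n(x)=\#\{z\in\xi_n\cap B:\ d_n(z)>r_n(x)\}
\]
be the number of anchors in \(B\) whose empty disk exceeds this threshold. The argument behind \Cref{thm:main} has three ingredients. The first is the adaptive-radius Poisson theorem of \cite{LopattoOtto}. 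The second is the uniform comparison of finite and infinite reduced-Palm determinants for \(r=O(n^{-1/2}(\log n)^{1/4})\), which replaces the adaptive radius by the common radius \(r_n(x)\) uniformly over \(B\). The third is the identity showing that the intensity is \((n/\pi)|B|\,q(\sqrt n\,r_n(x))(1+o(1))\to e^{-x}\). Together these give \(N_n(x)\Rightarrow\Pois(e^{-x})\). \Cref{thm:main} is the special case \(\Pp\{N_n(x)=0\}\to e^{-e^{-x}}\), so I would extract this stronger statement as an intermediate proposition, proved by exactly the same estimates.

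Next I would convert the event for \(M_{n,k}\) into a counting event. Since \(\Psi\) is strictly increasing,
\[
\Psi(\sqrt n\,M_{n,k})-\log\frac{n|B|}{\pi}\le x
\iff M_{n,k}\le r_n(x).
\]
By the definition of the decreasing arrangement, \(M_{n,k}\le r_n(x)\) holds exactly when fewer than \(k\) values \(d_n(z)\), \(z\in\xi_n\cap B\), exceed \(r_n(x)\), that is, when \(N_n(x)\le k-1\). The convention \(M_{n,k}=0\) when there are fewer than \(k\) anchors is consistent with this, because then \(N_n(x)\le k-1\) automatically. Ties \(d_n(z)=r_n(x)\) do not matter, since the counting statement uses strict exceedance and the order statistics are defined by the same inequality. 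Therefore
\[
\Pp\{N_n(x)\le k-1\}\to\sum_{j=0}^{k-1}e^{-e^{-x}}\frac{e^{-jx}}{j!},
\]
which is the claimed limit.

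The only real obstacle is that the Poisson convergence must hold for the full law of \(N_n(x)\), not only for its void probability. I would handle this by checking that the input from \cite{LopattoOtto} is a convergence of factorial moments, or equivalently a total-variation Poisson approximation, for the counting variable at the adaptive radius. The deterministic-threshold transfer in Section~5 bounds the expected symmetric difference between the adaptive and deterministic exceedance sets by \(o(1)\), and this transfers any such distributional convergence of counts, because the difference in counts is bounded by that symmetric difference. If the cited result only controls void probabilities, I would instead apply it to the sets \(B_1,\dots,B_m\) of a fine partition of \(B\). Each piece still satisfies \eqref{eq:Bassumption} and has intensity \(e^{-x}|B_i|/|B|\). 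Asymptotic independence of the void events across pieces, together with Rényi's theorem for simple point processes, then upgrades void convergence to Poisson convergence of \(N_n(x)\).
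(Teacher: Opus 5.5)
Your proof is correct, and its skeleton is the paper's: the event is exactly \(\{N_n(t_n(x);B)\le k-1\}\), and the Poisson law of that count comes from \Cref{prop:LO}. The one genuine difference is how you pass from the adaptive radius to the deterministic one.

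The paper does not bound a symmetric difference. It uses the pathwise sandwich \(N_{n,\kappa_x(1+\varepsilon)}(B)\ge N_n(t_n(x);B)\ge N_{n,\kappa_x(1-\varepsilon)}(B)\), which follows from monotonicity of \(q_n(a,\cdot)\) and \eqref{eq:thresholduniform}. It then applies \Cref{prop:LO} at these two intensities and lets \(\varepsilon\downarrow0\), using continuity of Poisson probabilities in the mean.

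Your first-moment coupling is not what Section~5 states, but it is valid. Apply \Cref{lem:campbellpalm} with the radii \(\min\{R_{n,\kappa_x}(a),t_n(x)\}\) and \(\max\{R_{n,\kappa_x}(a),t_n(x)\}\) and subtract. This shows that the expected number of anchors in \(B\) retained by exactly one of the two rules is
\[
 \int_B\bigl|K_n(a,a)q_n(a,t_n(x))-\kappa_x\bigr|\dd^2a
 \le|B|\sup_{a\in B}\bigl|K_n(a,a)q_n(a,t_n(x))-\kappa_x\bigr|=o(1),
\]
so the adaptive and deterministic counts coincide with probability tending to one.

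Your route yields a slightly stronger coupling statement and uses \Cref{prop:LO} at the single intensity \(\kappa_x\), with no \(\varepsilon\)-limit. Its price is an extra Campbell--Palm computation, whereas the paper's sandwich is purely pathwise.

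Finally, the obstacle you flag is already resolved by the cited input. The Kantorovich--Rubinstein bound \eqref{eq:LOKR} controls \(\Pp\{\mu(A)\le m\}\) for every \(m\), because \(\mu\mapsto\one_{\{\mu(A)\le m\}}\) is \(1\)-Lipschitz for the total-variation metric. The full count law therefore comes directly from \Cref{prop:LO}, and neither the factorial-moment check nor the partition/R\'enyi fallback is needed.
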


\section{Ginibre kernels, Palm holes, and the Poisson input}

\subsection{From the eigenvalue density to the projection kernel}

We collect the determinantal facts used later and keep track of the
normalization. If \(X_n\) has density
\((n/\pi)^{n^2}e^{-n\Tr X_nX_n^*}\) on \(\C^{n\times n}\), the complex Schur
decomposition, followed by integration over the strictly upper triangular
entries and the unitary factor, gives \eqref{eq:introjpdf}; see
\cite[Chapter~2]{ByunForrester}.  Since
\[
 \prod_{1\le i<j\le n}|z_i-z_j|^2
 =\left|\det[z_i^{j-1}]_{i,j=1}^n\right|^2,
\]
the density is an orthogonal-polynomial ensemble for the planar Gaussian
weight.  The monomials satisfy
\begin{equation}\label{eq:monomialorthogonality}
 \int_{\C}z^j\bar z^k e^{-n|z|^2}\dd^2z
 =\frac{\pi j!}{n^{j+1}}\,\one_{\{j=k\}},
\end{equation}
as follows by passing to polar coordinates.  Thus the functions
\(\varphi_{j,n}\) below form an orthonormal family in
\(L^2(\C,\dd^2z)\).

For completeness, recall that the \(k\)-point correlation function
\(\rho_n^{(k)}\) is characterized by
\begin{align}\label{eq:correlationdefinition}
 &\mathbb E\!\sum_{z_1,\ldots,z_k\in\xi_n}^{\ne}
 f(z_1,\ldots,z_k)=
 \int_{\C^k}f(z_1,\ldots,z_k)\rho_n^{(k)}(z_1,\ldots,z_k)
 \prod_{j=1}^k\dd^2z_j
\end{align}
for every nonnegative measurable \(f\), where the sum is over ordered
tuples of distinct points.  AndrÃ©ief's identity applied to
\eqref{eq:introjpdf} and \eqref{eq:monomialorthogonality} yields
\[
 \rho_n^{(k)}(z_1,\ldots,z_k)
 =\det[K_n(z_i,z_j)]_{i,j=1}^k.
\]
With respect to planar Lebesgue measure, \(\xi_n\) is determinantal with
kernel \cite[Theorem 4.3.10]{HKPV}
\begin{equation}\label{eq:Kn}
 \begin{aligned}
 \varphi_{j,n}(z)&=\left(\frac{n^{j+1}}{\pi j!}\right)^{1/2}
 z^j e^{-n|z|^2/2},\\
 K_n(z,w)&=\sum_{j=0}^{n-1}\varphi_{j,n}(z)\overline{\varphi_{j,n}(w)}=\frac n\pi e^{-\frac n2(|z|^2+|w|^2)}
   \sum_{j=0}^{n-1}\frac{(nz\bar w)^j}{j!}.
 \end{aligned}
\end{equation}
The microscopic infinite Ginibre kernel is the locally trace-class projection
\begin{equation}\label{eq:Kinf}
 K_\infty(z,w)=\sum_{j=0}^{\infty}\varphi_j(z)\overline{\varphi_j(w)}
 =\frac1\pi e^{z\bar w-|z|^2/2-|w|^2/2}.
\end{equation}
Here \(\varphi_j(z)=z^je^{-|z|^2/2}/\sqrt{\pi j!}\), and
\(\xi_\infty\) denotes the determinantal process with kernel \(K_\infty\).
Its unit-disk-scale
version, of intensity \(n/\pi\), is
\begin{equation}\label{eq:Kinfn}
 K_{\infty,n}(z,w)=nK_\infty(\sqrt n\,z,\sqrt n\,w)
 =\frac n\pi e^{nz\bar w-n|z|^2/2-n|w|^2/2}.
\end{equation}
The diagonal of the finite kernel has the useful Poisson representation
\begin{equation}\label{eq:densitypoisson}
 K_n(z,z)=\frac n\pi e^{-n|z|^2}\sum_{j=0}^{n-1}\frac{(n|z|^2)^j}{j!}
 =\frac n\pi\Pp\{\Pois(n|z|^2)\le n-1\}.
\end{equation}
In particular, if \(|z|\le s\) with fixed \(s<1\), a Chernoff
bound gives
\begin{equation}\label{eq:bulkintensityprelim}
 K_n(z,z)=\frac n\pi\bigl(1+O_s(e^{-c_sn})\bigr).
\end{equation}
This is the quantitative bulk flatness that later makes the deterministic
threshold independent of the location of the anchor.

Although the expression \eqref{eq:Kinf} is not literally translation
invariant, its associated point process is.  Indeed, for \(a\in\C\), the
magnetic translation
\[
 (U_af)(z)=e^{i\operatorname{Im}(z\bar a)}f(z-a)
\]
is unitary on \(L^2(\C,\dd^2z)\).  The identity
\[
 K_\infty(z+a,w+a)
 =e^{i\operatorname{Im}(z\bar a)}K_\infty(z,w)
  e^{-i\operatorname{Im}(w\bar a)}
\]
shows that it conjugates the compression of \(K_\infty\) to \(D_r(0)\)
with that to \(D_r(a)\).
Fredholm determinants are invariant under this conjugation.  Consequently
both ordinary and reduced-Palm disk-hole probabilities depend only on the
radius, once the Palm point and disk centre are translated together.
The same statement holds for \(K_{\infty,n}\), with magnetic phase
\(e^{in\operatorname{Im}(z\bar a)}\). Dilation by \(\sqrt n\) identifies
its disk of physical radius \(r\) with a disk of microscopic radius
\(\sqrt n\,r\) for \(K_\infty\).

\subsection{Kostlan's radial representation}

The rotational symmetry of Ginibre gives an additional exact structure that
is particularly useful for disk events.  The following result is usually
referred to as Kostlan's theorem \cite{Kostlan}; see also
\cite[Section~2.9]{ByunForrester}.

For \(k\ge1\) and \(x\ge0\), write
\begin{equation}\label{eq:Qdefinition}
 Q(k,x)=\frac{\Gamma(k,x)}{\Gamma(k)},\qquad
 \Gamma(k,x)=\int_x^\infty t^{k-1}e^{-t}\dd t,qquad
 \gamma(k,x)=\int_0^x t^{k-1}e^{-t}\dd t.
\end{equation}

\begin{lemma}[Kostlan]\label{lem:kostlan}
Let \(z_1,\ldots,z_n\) be the eigenvalues of \(X_n\), with their labels
discarded.  The unordered multiset
\[
 \{n|z_1|^2,\ldots,n|z_n|^2\}
\]
has the same law as the unordered multiset
\(\{\gamma_1,\ldots,\gamma_n\}\), where the variables are independent and
\(\gamma_k\) has gamma density
\[
 \frac{x^{k-1}e^{-x}}{\Gamma(k)}\one_{\{x>0\}}\dd x.
\]
Consequently,
\begin{equation}\label{eq:finiteordinaryhole}
 \Pp\{\xi_n(D_r)=0\}=\prod_{k=1}^nQ(k,nr^2).
\end{equation}
\end{lemma}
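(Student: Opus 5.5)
The plan is to reduce Kostlan's lemma to the classical fact that the joint density \eqref{eq:introjpdf}, being rotation invariant with a Vandermonde factor, turns into a product of independent radial laws once the angles are integrated out. First I will symmetrize. Since \eqref{eq:introjpdf} is a symmetric density on \(\C^n\), the law of the unordered multiset \(\{n|z_j|^2\}\) is determined by the expectations
\[
 \mathbb E\prod_{j=1}^n f(n|z_j|^2)
\]
for nonnegative measurable \(f\), or more generally by expectations of symmetric functions \(F(n|z_1|^2,\ldots,n|z_n|^2)\). It therefore suffices to compute the joint law of the moduli, as a symmetric density, and to show that it equals the symmetrization of \(\prod_k x_k^{k-1}e^{-x_k}/\Gamma(k)\).

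The second step is the angular integration. Write \(\prod_{i<j}|z_i-z_j|^2=|\det[z_i^{j-1}]|^2\) and expand both determinants as sums over permutations \(\sigma,\tau\in S_n\):
\[
 |\det[z_i^{j-1}]|^2=\sum_{\sigma,\tau}\operatorname{sgn}(\sigma\tau)\prod_i z_i^{\sigma(i)-1}\bar z_i^{\tau(i)-1}.
\]
Pass to polar coordinates \(z_i=\rho_ie^{i\theta_i}\) and integrate each \(\theta_i\) over \([0,2\pi)\) with the moduli held fixed. Only the terms with \(\sigma=\tau\) survive, exactly as in \eqref{eq:monomialorthogonality}. After integrating out the angles, the density of \((\rho_1,\ldots,\rho_n)\) becomes
\[
 c_n\sum_{\sigma\in S_n}\prod_i \rho_i^{2\sigma(i)-1}e^{-n\rho_i^2}\cdot(2\pi)^n .
\]
Now change variables to \(x_i=n\rho_i^2\), so that \(\dd x_i=2n\rho_i\dd\rho_i\). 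Each term becomes \(\prod_i x_i^{\sigma(i)-1}e^{-x_i}\) up to a constant, and this constant is identified with \(\prod_k\Gamma(k)^{-1}/n!\) by the total-mass normalization, which I will check against the prefactor in \eqref{eq:introjpdf}. The resulting density is \(\frac1{n!}\sum_\sigma\prod_i g_{\sigma(i)}(x_i)\) with \(g_k\) the gamma\((k)\) density. This is precisely the law of a uniformly random labelling of independent \(\gamma_1,\ldots,\gamma_n\), so the two unordered multisets agree in law.

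The consequence \eqref{eq:finiteordinaryhole} then follows directly: \(\xi_n(D_r)=0\) iff all \(n|z_j|^2\ge nr^2\), so its probability is \(\prod_k\Pp\{\gamma_k\ge nr^2\}=\prod_kQ(k,nr^2)\). The only step needing care is the bookkeeping of constants and the justification of Fubini for the angular integration; the latter is immediate since the integrands are nonnegative after absolute values and Gaussian-integrable. I expect no real obstacle here, with the constant check being the most error-prone part. It can be sidestepped entirely, because both sides are probability densities that agree up to a positive constant.
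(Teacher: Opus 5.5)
Your proof is correct and follows the paper's argument: expand $|\det[z_i^{j-1}]|^2$ as a double sum over permutations, use angular integration to kill every term with $\sigma\neq\tau$, and recognize the resulting density in $x_i=n|z_i|^2$ as the symmetrization of $\prod_k x_k^{k-1}e^{-x_k}/\Gamma(k)$; the hole product then follows from independence. One point is worth stating explicitly: the factor $n^{-\sum_i(\sigma(i)-1)}=n^{-n(n-1)/2}$ produced by the change of variables does not depend on $\sigma$, and this is what makes the mixture over $\sigma$ uniform.
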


\begin{proof}
Write \(z_j=\sqrt{x_j/n}\,e^{i\theta_j}\). Expanding the two Vandermonde
determinants in \eqref{eq:introjpdf} gives
\[
 |\det[z_i^{j-1}]|^2
 =n^{-n(n-1)/2}\sum_{\sigma,\tau\in S_n}\operatorname{sgn}(\sigma\tau)
 \prod_{i=1}^n
 x_i^{(\sigma(i)+\tau(i)-2)/2}
 e^{i(\sigma(i)-\tau(i))\theta_i}.
\]
Integration over \(\theta_1,\ldots,\theta_n\) kills every term except
\(\sigma=\tau\). Since \(\dd^2z_i=(2n)^{-1}\dd x_i\dd\theta_i\), the joint
density of the labeled squared radii is therefore proportional to
\[
 e^{-\sum_i x_i}\operatorname{per}
 [x_i^{j-1}]_{i,j=1}^n\prod_{i=1}^n\one_{\{x_i>0\}}\dd x_i,
\]
where \(\operatorname{per}\) denotes the permanent.  After using the
normalizing factors \(\Gamma(j)\), this is the symmetrization of
\[
 \prod_{j=1}^n\frac{x_j^{j-1}e^{-x_j}}{\Gamma(j)}\dd x_j.
\]
Discarding labels proves the first assertion.  The disk is empty precisely
when every rescaled squared radius exceeds \(nr^2\). Independence then gives
\[
 \prod_{k=1}^n\Pp\{\gamma_k>nr^2\}
 =\prod_{k=1}^n\frac{\Gamma(k,nr^2)}{\Gamma(k)},
\]
which is \eqref{eq:finiteordinaryhole}.
\end{proof}

Kostlan's representation gives a short alternative route to the ordinary
disk product, but not directly to the anchored nearest-neighbour event.
The latter is a statement under a reduced Palm measure, and the Palm
conditioning changes precisely one radial mode.  We therefore return to the
operator formulation.

\subsection{Reduced Palm processes and anchored holes}

For any of the kernels \(K_n,K_{\infty,n},K_\infty\), the reduced-Palm
kernel at \(a\in\C\) is \cite[Theorem 1.7]{ShiraiTakahashi}
\begin{equation}\label{eq:Palmkernel}
 K^a(z,w)=K(z,w)-\frac{K(z,a)K(a,w)}{K(a,a)}.
\end{equation}
To make this projection statement precise, regard the kernels as operators
on \(L^2(\C,\mathrm d^2z)\). The reproducing identity shows that
\eqref{eq:Palmkernel} is the orthogonal projection onto the functions in
the corresponding weighted Fock range that vanish at \(a\). In particular,
\[
 \mathcal H_n=\operatorname{span}(\varphi_{0,n},\ldots,\varphi_{n-1,n}),
 \qquad
 \mathcal H_{\infty,n}=\overline{\operatorname{span}}
 \{\varphi_{j,n}:j\ge0\},
 \qquad \mathcal H_n^a\subset\mathcal H_{\infty,n}^a.
\]

\begin{lemma}[The reduced-Palm product]\label{lem:palmproduct}
For the infinite Ginibre process of intensity \(1/\pi\),
\[
 \Pp\{\xi_\infty^{0,!}(D_r)=0\}
 =\det(I-\one_{D_r}K_\infty^0\one_{D_r})
 =\prod_{k=2}^{\infty}Q(k,r^2)=q(r).
\]
The product converges locally uniformly, and \(q\) is continuous and
strictly decreasing from \(1\) to \(0\).
If \(H(r)=\Pp\{\xi_\infty(D_r)=0\}\), then
\begin{equation}\label{eq:palmordinary}
 q(r)=e^{r^2}H(r).
\end{equation}
This identity is also recorded in
\cite[Remark~3.1(1)]{ByunForrester}.
\end{lemma}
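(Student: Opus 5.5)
The plan is to identify \(K_\infty^0\) explicitly as a projection diagonal in the monomial basis, restricted to the radial modes \(k\ge1\), and then read off the Fredholm determinant mode by mode.

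\emph{Step 1: the Palm kernel.} Since \(\varphi_j(0)=0\) for \(j\ge1\) and \(\varphi_0(0)=\pi^{-1/2}\), we have \(K_\infty(z,0)=\varphi_0(z)\pi^{-1/2}\) and \(K_\infty(0,0)=1/\pi\). Formula \eqref{eq:Palmkernel} therefore gives
\[
 K_\infty^0(z,w)=\sum_{j\ge1}\varphi_j(z)\overline{\varphi_j(w)}.
\]
This is exactly the projection onto the closed span of \(\{\varphi_j\}_{j\ge1}\).

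\emph{Step 2: the determinant.} The hole probability of a determinantal process with a locally trace-class Hermitian kernel is \(\det(I-\one_{D_r}K\one_{D_r})\); this is the first equality, valid by Shirai--Takahashi. The functions \(\one_{D_r}\varphi_j\) remain mutually orthogonal by rotational invariance of the disk, with
\[
 \|\one_{D_r}\varphi_j\|^2=\frac{\gamma(j+1,r^2)}{\Gamma(j+1)}=1-Q(j+1,r^2).
\]
Hence \(\one_{D_r}K^0_\infty\one_{D_r}=\sum_{j\ge1}\|\one_{D_r}\varphi_j\|^2\,P_j\), where \(P_j\) is the rank-one projection onto \(\one_{D_r}\varphi_j/\|\one_{D_r}\varphi_j\|\). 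This operator is trace class, since \(\sum_j\gamma(j+1,r^2)/j!\le r^2\). Its eigenvalues are \(1-Q(k,r^2)\) for \(k\ge2\), so the determinant equals \(\prod_{k\ge2}Q(k,r^2)\). Alternatively, one can pass to the limit in a finite-\(n\) Kostlan-type argument, but the operator route is direct.

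\emph{Step 3: convergence, monotonicity and the identity \eqref{eq:palmordinary}.} Convergence follows from \(\sum_k(1-Q(k,x))=\sum_k\Pp\{\gamma_k\le x\}\). This sum is bounded uniformly for \(x\le X\), e.g.\ by \(\sum_k X^k/k!\le e^X\). Together with \(Q(k,x)\ge Q(2,X)>0\) for \(k\ge2\), this gives locally uniform convergence of \(\sum_k\log Q(k,x)\), hence continuity of \(q\). Each factor is continuous, strictly decreasing in \(x\), and equal to \(1\) at \(x=0\), so \(q\) is strictly decreasing with \(q(0)=1\). For the limit at infinity, note \(q(r)\le Q(2,r^2)=(1+r^2)e^{-r^2}\to0\). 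Finally, the same computation for \(K_\infty\) itself (or the \(n\to\infty\) limit of \eqref{eq:finiteordinaryhole}) gives \(H(r)=\prod_{k\ge1}Q(k,r^2)\). Since \(Q(1,r^2)=e^{-r^2}\), we get \(q=e^{r^2}H\).

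The only delicate point is justifying the Fredholm-determinant formula for the Palm process of the infinite process. I would cite \cite{ShiraiTakahashi}: the reduced Palm measure of a determinantal process with a Hermitian projection kernel is determinantal with kernel \eqref{eq:Palmkernel}, and disk-restricted kernels are trace class. Everything else is the explicit diagonalization above.
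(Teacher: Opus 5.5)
Your proof is correct and follows essentially the same route as the paper. Both arguments identify $K_\infty^0$ as the projection with the constant Fock mode removed, diagonalize the disk compression through the mutually orthogonal vectors $\one_{D_r}\varphi_j$, $j\ge1$, control the tail of $\sum_k(1-Q(k,r^2))$ uniformly on compacts, and recover $H=e^{-r^2}q$ by restoring the $j=0$ factor. The only difference is cosmetic: you dominate the tail by $X^k/k!$, while the paper uses the Poisson tail probabilities $\Pp\{\Pois(R^2)\ge k\}$.
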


\begin{proof}
At the origin,
\(K_\infty(\,\cdot\,,0)=\varphi_0/\sqrt\pi\), and the rank-one
subtraction in \eqref{eq:Palmkernel} therefore removes precisely the
constant Fock mode:
\[
 K_\infty^0(z,w)
 =\sum_{j=1}^{\infty}\varphi_j(z)\overline{\varphi_j(w)}.
\]
Let \(T_r=\one_{D_r}K_\infty^0\one_{D_r}\), acting on
\(L^2(D_r,\dd^2z)\).  The vectors \(\one_{D_r}\varphi_j\), \(j\ge1\),
are mutually orthogonal by angular integration, and the nonzero
eigenvalues of \(T_r\) are their squared norms.  Passing to polar
coordinates and then setting \(s=\rho^2\) gives
\begin{align*}
 \lambda_j(r)
 &=\|\one_{D_r}\varphi_j\|_2^2
 =\frac{2}{j!}\int_0^r\rho^{2j+1}e^{-\rho^2}\dd\rho=\frac1{j!}\int_0^{r^2}s^je^{-s}\dd s
 =\frac{\gamma(j+1,r^2)}{j!}.
\end{align*}
Moreover,
\[
 \Tr T_r=\int_{D_r}K_\infty^0(z,z)\dd^2z<\infty,
\]
so the eigenvalues are summable and the Fredholm determinant is the
absolutely convergent product
\[
 \det(I-T_r)
 =\prod_{j=1}^{\infty}
 \left(1-\frac{\gamma(j+1,r^2)}{j!}\right)
 =\prod_{k=2}^{\infty}Q(k,r^2).
\]
For every \(R<\infty\), the gamma--Poisson identity gives
\[
 \sup_{0\le r\le R}\sum_{k>K}\bigl(1-Q(k,r^2)\bigr)
 \le \sum_{k>K}\Pp\{\Pois(R^2)\ge k\}\longrightarrow0.
\]
Thus the product converges locally uniformly and is positive. Each factor is
strictly decreasing for \(r>0\), while
\(q(0)=1\) and
\(q(r)\le Q(2,r^2)=e^{-r^2}(1+r^2)\to0\). This proves the stated
continuity and monotonicity.
For the ordinary process the \(j=0\) mode is also present.  Its factor in
the determinant is
\[
 1-\gamma(1,r^2)=\Gamma(1,r^2)=e^{-r^2}.
\]
Thus \(H(r)=e^{-r^2}q(r)\), which is equivalent to
\eqref{eq:palmordinary}.
\end{proof}

Under the dilation \(z\mapsto z/\sqrt n\), the intensity-\(1/\pi\)
process \(\xi_\infty\) becomes the process with kernel
\(K_{\infty,n}\). Hence its reduced-Palm hole probability in a physical
disk of radius \(r\) is
\begin{equation}\label{eq:scaledpalmproduct}
 \det(I-\one_{D_r(a)}K_{\infty,n}^a\one_{D_r(a)})
 =q(\sqrt n\,r)=\prod_{k=2}^{\infty}Q(k,nr^2).
\end{equation}

Returning to the microscopic radius \(R\), the omission of the \(k=1\)
factor in \(q\) has a direct probabilistic
meaning.  Under the ordinary process, the constant Fock mode contributes the
eigenvalue \(1-e^{-R^2}\) to the disk compression. Conditioning on a point
at the centre and then removing that point deletes exactly this mode.  Thus
the quotient of the two hole probabilities is the reciprocal of
\(Q(1,R^2)\), which is \(e^{R^2}\). This factor is negligible compared with
the leading exponent \(R^4/4\), but it is one of the dominant corrections at
the fluctuation precision considered here.

We shall repeatedly use the Fredholm-determinant interpretation in the
following elementary form.  If \(K\) is a locally trace-class Hermitian
contraction defining a determinantal process \(\eta\), then for every bounded
Borel set \(D\),
\begin{equation}\label{eq:genericvoiddet}
 \Pp\{\eta(D)=0\}=\det(I-\one_DK\one_D).
\end{equation}
Indeed, the number of points in \(D\) is distributed as a sum of independent
Bernoulli variables with parameters equal to the eigenvalues of the
compression.  Formula \eqref{eq:genericvoiddet} follows by multiplying their
zero probabilities.  This also shows directly that enlarging \(D\) decreases
the void probability.

\begin{lemma}[Finite Palm-hole monotonicity]\label{lem:qnmonotone}
For \(n\ge2\) and \(a\in\C\), the function
\[
 q_n(a,r):=\Pp\{\xi_n^{a,!}(D_r(a))=0\},\qquad r\ge0,
\]
is continuous and strictly decreasing from \(1\) to \(0\). Moreover,
\((a,r)\mapsto K_n(a,a)q_n(a,r)\) is continuous.
\end{lemma}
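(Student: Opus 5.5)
We follow the proof of \Cref{lem:palmproduct}, with the finite Palm kernel in place of \(K_\infty^0\). By \eqref{eq:genericvoiddet} and \eqref{eq:Palmkernel},
\[
 q_n(a,r)=\det\bigl(I-\one_{D_r(a)}K_n^a\one_{D_r(a)}\bigr),
\]
where \(K_n^a\) is the orthogonal projection onto the \((n-1)\)-dimensional space \(\mathcal H_n^a\) of functions in \(\mathcal H_n\) vanishing at \(a\). Note that \(K_n(a,a)>0\) for every \(a\), by \eqref{eq:densitypoisson}.

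\textbf{Eigenvalue structure.} The compression \(T_r:=\one_{D_r(a)}K_n^a\one_{D_r(a)}\) has the same nonzero spectrum as \(P\one_{D_r(a)}P\), where \(P=K_n^a\). The latter is a finite-rank operator on \(\mathcal H_n^a\). Its eigenvalues \(\mu_1(r)\ge\dots\ge\mu_{n-1}(r)\) are the Rayleigh values of
\[
 f\mapsto\int_{D_r(a)}|f|^2\quad\text{on }\mathcal H_n^a,
\]
and they lie in \([0,1)\). Strict inequality \(\mu_j<1\) holds because a nonzero polynomial times a Gaussian cannot vanish on the complement of a disk. Consequently \(q_n=\prod_j(1-\mu_j)>0\).

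\textbf{Continuity and the boundary values.} The quadratic form \(\int_{D_r(a)}|f|^2\) on the finite-dimensional space depends continuously on \((a,r)\), uniformly on the unit sphere, by dominated convergence. Hence the ordered eigenvalues are continuous, and so is \(q_n\).

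At \(r=0\) the form vanishes, so \(q_n(a,0)=1\).

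As \(r\to\infty\), the form converges to \(\|f\|^2\) uniformly on the compact unit sphere of \(\mathcal H_n^a\). Therefore \(\mu_{n-1}(r)\to1\). Since \(n\ge2\), the space \(\mathcal H_n^a\) is nontrivial, and this gives \(q_n(a,r)\to0\).

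\textbf{Strict monotonicity.} This is the main point. For \(r<r'\), the form difference is
\[
 \int_{D_{r'}(a)\setminus D_r(a)}|f|^2,
\]
which is strictly positive for every nonzero \(f\in\mathcal H_n^a\), since \(f\) is a polynomial times a Gaussian and cannot vanish on an open annulus. By compactness of the unit sphere, this difference is bounded below by some \(\delta>0\). The min--max principle then gives \(\mu_j(r')\ge\mu_j(r)+\delta\) for every \(j\), so each factor \(1-\mu_j\) strictly decreases. Because all factors are positive, the product strictly decreases.

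\textbf{Joint continuity of \(K_n(a,a)q_n(a,r)\).} \(K_n(a,a)\) is an explicit continuous function of \(a\), and \(q_n(a,r)\) is jointly continuous by the argument above. For that argument to hold jointly, the projection \(K_n^a\), and hence the space \(\mathcal H_n^a\), must vary continuously in \(a\).

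This is the one step requiring care. Writing \(K_n^a=K_n-K_n(\cdot,a)K_n(a,\cdot)/K_n(a,a)\), the kernel \(K_n^a\) is continuous in \(a\) in trace norm, because \(K_n(a,a)>0\). Then \(T_r\) depends continuously on \((a,r)\) in trace norm, since multiplication by \(\one_{D_r(a)}\) is strongly continuous and \(K_n^a\) has finite rank. The Fredholm determinant is continuous in trace norm, which yields joint continuity of \(q_n\), and hence of the product \(K_n(a,a)q_n(a,r)\).

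Alternatively, the product can be handled directly. \(K_n(a,a)q_n(a,r)\) equals the Janossy-type density
\[
 \sum_{m\ge0}\frac{(-1)^m}{m!}\int_{D_r(a)^m}\det\bigl[K_n(x_i,x_j)\bigr]_{i,j=0}^m\prod_{i=1}^m\dd^2x_i,\qquad x_0=a.
\]
This series terminates at \(m=n-1\), since the determinants vanish for \(m\ge n\) because \(K_n\) has rank \(n\). Each term is continuous in \((a,r)\) by dominated convergence. This route avoids dividing by \(K_n(a,a)\) altogether.
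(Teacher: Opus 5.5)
Your proof is correct, but it takes a genuinely different route from the paper's. The paper does not use the determinantal Palm kernel for this lemma. Instead, it writes \(q_n(a,r)\) as a ratio of explicit integrals \eqref{eq:finitePalmholeintegral}, obtained by fixing one eigenvalue at \(a\) in the joint density, and then argues directly from that density:
\begin{itemize}
\item strict decrease comes from a positive-probability set of configurations with exactly one point in the annulus \(D_{r_2}(a)\setminus\overline{D_{r_1}(a)}\);
\item the limit \(0\) comes from the fact that the Palm configuration has exactly \(n-1\) points;
\item joint continuity comes from the substitution \(z_i=a+u_i\) and dominated convergence, applied to both the numerator and the normalizing constant.
\end{itemize}
You stay inside the projection framework of \Cref{lem:palmproduct}. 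You write \(q_n=\prod_j(1-\mu_j)\), where the \(\mu_j\) are the eigenvalues of the form \(f\mapsto\int_{D_r(a)}|f|^2\) on the \((n-1)\)-dimensional space \(\mathcal H_n^a\). Strictness then follows from min--max together with the fact that a nonzero polynomial times a Gaussian cannot vanish on an open annulus. Joint continuity follows from trace-norm continuity of the rank-one Palm correction (using \(K_n(a,a)>0\)), strong continuity of \(\one_{D_r(a)}\), and continuity of the Fredholm determinant.

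What your version buys is more structure: strict positivity of every factor, a uniform increment \(\delta\) in every eigenvalue, and the same operator picture the paper later exploits in \Cref{lem:palmcompare}. The cost is that it depends on the Shirai--Takahashi identification \eqref{eq:Palmkernel} of the reduced Palm process, used at every \(a\) via the continuous version, and on trace-norm continuity of Fredholm determinants. The paper's density argument is more elementary and self-contained given \eqref{eq:introjpdf}, and needs no spectral theory.

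Your Janossy-series alternative is closest in spirit to the paper. Like the paper, it works with explicit correlation densities and dominated convergence, and its termination at \(m=n-1\) plays the same role as the paper's use of the finite number of points.
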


\begin{proof}
Write \(Z_{n,a}\) for the normalizing constant of the reduced-Palm density.
The correlation-density formula, with one eigenvalue fixed at \(a\), gives
\begin{align}\label{eq:finitePalmholeintegral}
 q_n(a,r)=\frac1{Z_{n,a}}
 \int_{\{|z_i-a|\ge r,\ 1\le i\le n-1\}}
 &e^{-n\sum_{i=1}^{n-1}|z_i|^2}
 \prod_{i=1}^{n-1}|z_i-a|^2\notag\\
 &\times\prod_{1\le i<j\le n-1}|z_i-z_j|^2
 \prod_{i=1}^{n-1}\mathrm d^2z_i.
\end{align}
Thus, up to the positive normalizing constant \(Z_{n,a}\), the reduced-Palm
law has density
\begin{equation}\label{eq:finitePalmdensity}
 e^{-n\sum_{i=1}^{n-1}|z_i|^2}
 \prod_{i=1}^{n-1}|z_i-a|^2
 \prod_{1\le i<j\le n-1}|z_i-z_j|^2
 \prod_{i=1}^{n-1}\mathrm d^2z_i .
\end{equation}
The integrand is strictly positive away from the algebraic set on which one
coordinate equals \(a\) or two coordinates coincide; that exceptional set
has Lebesgue measure zero.

Let \(0\le r_1<r_2\).  Choose a nonempty open ball contained in
\(D_{r_2}(a)\setminus\overline{D_{r_1}(a)}\), and choose \(n-2\) pairwise
disjoint open balls outside \(\overline{D_{r_2}(a)}\), also disjoint from
the first ball.  Configurations with one coordinate in the first ball and
the remaining coordinates in the other balls have positive reduced-Palm
probability.  Every such configuration avoids \(D_{r_1}(a)\) but not
\(D_{r_2}(a)\), proving strict decrease.

If \(r_m\to r\), the indicators in \eqref{eq:finitePalmholeintegral}
converge pointwise except when one of the coordinates belongs to
\(\partial D_r(a)\).  This exceptional set is null, and dominated
convergence proves continuity in \(r\).  Clearly \(q_n(a,0)=1\).  As
\(r\to\infty\), the void events decrease to the event that the
reduced-Palm configuration, which has exactly \(n-1\) points, is empty;
the latter event has probability zero.  Hence the range is all of
\((0,1]\), with limit zero at infinity.

For joint continuity, set \(z_i=a+u_i\) in both the numerator and the
normalizing integral.  On a compact set of values of \(a\), the translated
integrand is bounded by an integrable function of the form
\[
 C\exp\!\left(-\frac n2\sum_i|u_i|^2\right)
 \left(1+\sum_i|u_i|\right)^{C_n}.
\]
If \((a_m,r_m)\to(a,r)\), dominated convergence applies to the numerator;
the moving circle boundaries are again null.  It also applies to the
normalizing denominator.  Therefore \((a,r)\mapsto q_n(a,r)\) is
continuous.  Since \(K_n(a,a)>0\) and is continuous, the last assertion
follows.
\end{proof}

\subsection{Rare isolated points and their intensity}

For a deterministic radius \(r\), introduce the exceedance process
\begin{equation}\label{eq:exceedanceprocess}
 \mathcal E_n(r)=
 \sum_{a\in\xi_n\cap D_1}
 \one_{\{(\xi_n\setminus\{a\})(D_r(a))=0\}}\,\delta_a.
\end{equation}

\begin{lemma}[Campbell--Palm intensity identity]
\label{lem:campbellpalm}
For every Borel set \(A\subset D_1\),
\begin{equation}\label{eq:exceedanceintensity}
 \mathbb E\mathcal E_n(r)(A)
 =\int_AK_n(a,a)q_n(a,r)\dd^2a.
\end{equation}
More generally, the same formula holds with \(r\) replaced by a measurable
location-dependent radius \(r(a)\).  In particular, the adaptive process in
\Cref{prop:LO} has intensity measure exactly
\(\kappa\dd^2z\) in unit-disk coordinates.
\end{lemma}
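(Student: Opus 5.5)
The identity is the Campbell--Mecke (refined Campbell) formula for the finite Ginibre process, combined with the definition of the reduced Palm distribution. I will run it directly in the finite-$n$ density form used in the proof of \Cref{lem:qnmonotone}, where everything is explicit.

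First I write
\[
\mathcal E_n(r)(A)=\sum_{a\in\xi_n}f(a,\xi_n\setminus\{a\}),\qquad f(a,\eta)=\one_A(a)\one_{\{\eta(D_{r(a)}(a))=0\}},
\]
which covers both the constant radius and a measurable radius $r(a)$. Since \eqref{eq:introjpdf} is symmetric, the expectation equals $n$ times the expectation of the term with $a=z_n$:
\[
\mathbb E\,\mathcal E_n(r)(A)=n\int_A\int_{\C^{n-1}}p_n(z_1,\ldots,z_{n-1},a)\,\one\{|z_i-a|\ge r(a)\ \forall i\}\prod_{i<n}\dd^2z_i\,\dd^2a,
\]
where $p_n$ is the joint density \eqref{eq:introjpdf}.

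Next I factor out the one-point marginal. The inner integral without the indicator equals $n^{-1}\rho_n^{(1)}(a)=n^{-1}K_n(a,a)$ by \eqref{eq:correlationdefinition} with $k=1$. The density of $(z_1,\ldots,z_{n-1})$ conditional on $z_n=a$ is proportional to \eqref{eq:finitePalmdensity}, because fixing $z_n=a$ in \eqref{eq:introjpdf} leaves exactly the factors $e^{-n\sum|z_i|^2}\prod|z_i-a|^2\prod_{i<j<n}|z_i-z_j|^2$ together with a constant depending on $a$. This conditional density is by definition the reduced Palm law at $a$ used in \Cref{lem:qnmonotone}, so it agrees with the Shirai--Takahashi kernel description \eqref{eq:Palmkernel}. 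Hence the inner integral equals $n^{-1}K_n(a,a)\,q_n(a,r(a))$, and \eqref{eq:exceedanceintensity} follows.

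Measurability of $a\mapsto q_n(a,r(a))$ comes from the joint continuity in \Cref{lem:qnmonotone}, composed with the measurable map $a\mapsto(a,r(a))$. Tonelli's theorem justifies every interchange, since all integrands are nonnegative.

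For the final claim I plug in the adaptive radius of \Cref{prop:LO}. I expect that radius to be defined as the solution $r(a)$ of $K_n(a,a)\,q_n(a,r(a))=\kappa$, which exists and is unique because $q_n(a,\cdot)$ is a continuous strictly decreasing bijection onto $(0,1]$, and is measurable in $a$ by continuity. With this choice the integrand is identically $\kappa$, so the intensity is $\kappa\dd^2z$.

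The main point needing care is this identification step: checking that the conditional density obtained from \eqref{eq:introjpdf} by freezing one coordinate is the same reduced Palm law as the determinantal one. This holds because both are characterized by the disintegration of the Campbell measure, which makes the Palm distribution unique for $K_n(a,a)\dd^2a$-almost every $a$; continuity in $a$ then removes the null-set ambiguity.
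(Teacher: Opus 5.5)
Your proposal is correct and takes essentially the paper's route: the paper applies the reduced Campbell--Mecke formula to \(f(a,\omega)=\one_{\{a\in A\}}\one_{\{\omega(D_{r(a)}(a))=0\}}\) and then uses \eqref{eq:adaptive} to make the integrand equal to \(\kappa\). The only difference is that you verify the Campbell--Mecke identity by hand for this finite exchangeable density, by symmetrising over the labelled coordinate and splitting \(p_n\) into the one-point marginal \(K_n(a,a)/n\) times the conditional density \eqref{eq:finitePalmdensity}; this is the same identification the paper itself makes in the proof of \Cref{lem:qnmonotone}.
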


\begin{proof}
Apply the reduced Campbell--Mecke formula to
\[
 f(a,\omega)=\one_{\{a\in A\}}
 \one_{\{\omega(D_{r(a)}(a))=0\}}.
\]
The left-hand side is the expected number of retained anchors in \(A\),
and the reduced-Palm expectation on the right-hand side is
\(q_n(a,r(a))\).  This gives the first integral in
\eqref{eq:exceedanceintensity}. If \(r(a)=R_{n,\kappa}(a)\), equation
\eqref{eq:adaptive} makes the integrand equal to \(\kappa\), proving
the final assertion.
\end{proof}

Thus \(K_n(a,a)q_n(a,r)\) is the intensity density of
anchors whose nearest-neighbour distance exceeds \(r\).  In the bulk,
\eqref{eq:bulkintensityprelim} and the finite/infinite comparison proved in
Section~4 reduce it to \((n/\pi)q(\sqrt n\,r)\). This calculation explains both the
factor \(n|B|/\pi\) in Theorem~\ref{thm:main} and why the reduced-Palm,
rather than ordinary, hole probability is the relevant tail.

For a chosen target intensity \(\kappa>0\), Lopatto and Otto select the
radius separately at each anchor so that the integrand in
\eqref{eq:exceedanceintensity} is exactly
\(\kappa\).  The preceding monotonicity lemma guarantees that this adaptive
radius is unambiguously defined. Indeed, uniformly for \(|a|<1\),
\[
 K_n(a,a)\ge\frac n\pi\Pp\{\Pois(n)\le n-1\}
 =\frac n{2\pi}(1+o(1)),
\]
where the inequality follows from \eqref{eq:densitypoisson} and monotonicity
of the Poisson distribution function. Thus \(K_n(a,a)>\kappa\) for all
large \(n\); since \(q_n(a,\,\cdot\,)\) decreases continuously from \(1\)
to \(0\), the defining equation below has a unique solution.

We use the following theorem of Lopatto--Otto
\cite[Theorem 1]{LopattoOtto}.
\begin{proposition}[Lopatto--Otto Poisson approximation]\label{prop:LO}
Fix \(\kappa>0\), \(\varepsilon\in(0,1/16)\), and a Borel set
\(A\subset\{z:|z|<1\}\) with \(\sup_{z\in A}|z|<1\). For all large \(n\), let
\(R_{n,\kappa}(a)\) be the unique radius satisfying
\begin{equation}\label{eq:adaptive}
 K_n(a,a)\,\Pp\{\xi_n^{a,!}(D_{R_{n,\kappa}(a)}(a))=0\}
 =\kappa.
\end{equation}
Let \(\Xi_{n,\kappa}\) be the point process of the retained unit-disk
anchors \(a\), where \(a\in\xi_n\cap D_1\) is retained when
\(d_n(a)>R_{n,\kappa}(a)\). If \(\zeta_\kappa\) is a Poisson process of
intensity \(\kappa\,\mathrm d^2z\), then, for a constant depending on
\(A,\kappa,\varepsilon\),
\begin{equation}\label{eq:LOKR}
 d_{\mathrm{KR}}(\Xi_{n,\kappa}\cap A,
                  \zeta_\kappa\cap A)
 \le C n^{-1/16+\varepsilon}.
\end{equation}
Consequently, if \(N_{n,\kappa}(A)=\Xi_{n,\kappa}(A)\), then
\[
 N_{n,\kappa}(A)\ \Longrightarrow\ \Pois(\kappa|A|).
\]
\end{proposition}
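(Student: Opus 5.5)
Since this proposition is \cite[Theorem 1]{LopattoOtto}, the plan is to take the bound \eqref{eq:LOKR} as given and justify only the final sentence, namely the Poisson limit of the counts. The one thing to check against \cite{LopattoOtto} is that their setting matches ours. Their adaptive radius is defined by \eqref{eq:adaptive}, which is well posed for large \(n\) by \Cref{lem:qnmonotone} and the lower bound \(K_n(a,a)\ge \frac{n}{2\pi}(1+o(1))\). Their retention rule is \(d_n(a)>R_{n,\kappa}(a)\), with neighbours taken from the full spectrum. By \Cref{lem:campbellpalm}, \(\Xi_{n,\kappa}\) then has intensity exactly \(\kappa\,\mathrm d^2z\), the same as \(\zeta_\kappa\).

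For the consequence, I would use the Kantorovich--Rubinstein distance in the form used by Lopatto and Otto. In that form, two finite configurations of different cardinalities are at distance \(1\), and the distance is a supremum of \(|\mathbb E f(\Xi)-\mathbb E f(\zeta)|\) over \(1\)-Lipschitz \(f\) bounded by \(1\). Fix \(k\ge0\) and set \(f_k(\omega)=\one_{\{\omega(A)=k\}}\). This function is constant on each cardinality class, so \(|f_k(\omega)-f_k(\omega')|\) is at most the configuration distance, and \(f_k\) is admissible. Applying \eqref{eq:LOKR} gives
\[
 \bigl|\Pp\{N_{n,\kappa}(A)=k\}-\Pp\{\zeta_\kappa(A)=k\}\bigr|
 \le Cn^{-1/16+\varepsilon}\to0 .
\]
Since \(\zeta_\kappa(A)\sim\Pois(\kappa|A|)\), pointwise convergence of the probability mass functions on \(\mathbb Z_{\ge0}\) yields \(N_{n,\kappa}(A)\Rightarrow\Pois(\kappa|A|)\).

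If instead one wanted to reprove \eqref{eq:LOKR} itself, I would follow the Barbour--Brown version of Stein's method with a Palm coupling. The anchor events are determined by the configuration in \(D_{R}(a)\) with \(\sqrt n R\asymp(\log n)^{1/4}\), so dependence is essentially local. The first error term compares the reduced-Palm law of \(\Xi_{n,\kappa}\) at \(a\) with the law of \(\Xi_{n,\kappa}\) itself. I would control it using two facts: the Palm kernel \eqref{eq:Palmkernel} differs from \(K_n\) by a rank-one term decaying like \(e^{-n|z-a|^2/2}\), and determinantal processes admit a coupling in which the reduced Palm version is dominated by the original process (Goldman's coupling). This leaves two contributions. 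One is from pairs of nearby retained anchors; it is bounded by two-point Palm hole probabilities, which are doubly exponentially small in \(\log n\). The other is the kernel tail beyond distance \(n^{-1/2+\delta}\). Balancing these produces a power loss of the type \(n^{-1/16+\varepsilon}\).

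The expected main obstacle in such a reproof is the second-order Palm estimate: two isolated anchors at distance of order \(R\) must be shown to be rare. This is where hole-probability bounds under two-point Palm conditioning are needed. For the present paper, however, the citation plus the cardinality-class argument above suffices.
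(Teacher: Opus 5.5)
Your proposal matches the paper: you take \eqref{eq:LOKR} from \cite[Theorem~1]{LopattoOtto} and derive the count limit by testing against a count indicator, which is $1$-Lipschitz because, in the total-variation metric Lopatto--Otto actually use, configurations with different masses on $A$ are at distance at least (not exactly) $1$; the paper uses $\one_{\{\mu(A)\le m\}}$ where you use $\one_{\{\omega(A)=k\}}$, which is immaterial. The paper also records routine checks you omit: the rescaling from Lopatto--Otto's variance-one coordinates, closed versus open disks, and the fact that the first arXiv version of their theorem covers every $\kappa>0$ (the revision assumes $\kappa>1$), which matters because $\kappa=e^{-x}/|B|$ must range over all of $(0,\infty)$.
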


To verify the normalization, let \(\widetilde K_n,\widetilde q_n\), and
\(\widetilde R_{n,\kappa}\) denote the quantities in
\cite[Theorem~1]{LopattoOtto}, which are written in variance-one
coordinates. Under \(z=\sqrt n\,a\),
\[
 K_n(a,a)=n\widetilde K_n(\sqrt n\,a,\sqrt n\,a),\qquad
 q_n(a,r)=\widetilde q_n(\sqrt n\,a,\sqrt n\,r),
\]
and hence
\[
 R_{n,\kappa}(a)=n^{-1/2}
 \widetilde R_{n,\kappa}(\sqrt n\,a).
\]
Thus \(\Xi_{n,\kappa}\) is exactly the push-forward of the process in
\cite[Theorem~1]{LopattoOtto} under \(z\mapsto z/\sqrt n\), and its
intensity equation is \(K_n(a,a)q_n(a,R_{n,\kappa}(a))=\kappa\).
The cited theorem uses closed rather than open disks. This does not change
the process: by Campbell--Palm, the expected number of relevant pairs lying
on \(\partial D_{R_{n,\kappa}(a)}(a)\) is zero, since every reduced-Palm
one-point intensity is absolutely continuous and every circle has zero
planar area.

Here \(d_{\mathrm{KR}}\) is the Kantorovich--Rubinstein distance on finite
point measures, with the total-variation metric used in
\cite{LopattoOtto}.
The stated convergence of the counts follows directly from the metric, not
only from a separate tightness argument.  For every integer \(m\ge0\), the
map
\[
 h_{A,m}(\mu)=\one_{\{\mu(A)\le m\}}
\]
is bounded and \(1\)-Lipschitz for the total-variation metric: if its values
differ, then the two integer-valued masses of \(A\) differ by at least one.
Thus \eqref{eq:LOKR} controls the difference between the corresponding count
distribution functions.

Here and below the versioned citation is deliberately to the archived first
version, arXiv:2501.04611v1.  Theorem~1 of that version states the result for
every fixed \(\kappa>0\), which is the form used here.  The theorem statement
in the later revision is restricted to \(\kappa>1\); we do not invoke that
revised formulation.  This distinction matters because
\(\kappa=e^{-x}/|B|\) ranges over all of \((0,\infty)\) as the Gumbel level
\(x\) varies.  In the first-version proof, \(\kappa\) is fixed before the
estimates are made, and the constants are permitted to depend on it.  Thus
the cited input is pointwise in \(\kappa\), exactly as required by the
fixed-\(x\) statements below; no uniformity as \(\kappa\downarrow0\) is used.
By
\Cref{lem:qnmonotone}, the threshold in \eqref{eq:adaptive} is unique; its
dependence on \(a\) is continuous by monotone inversion, so the retained
process is measurable.

\section{The sharp infinite reduced-Palm hole}

\subsection{Why a full hole expansion is needed}

The radius in this section is microscopic: a unit-disk radius \(s\) is
represented here by \(r=\sqrt n\,s\). Write \(x=r^2\). The exact ordinary hole probability is
\[
 H(r)=\prod_{k=1}^{\infty}Q(k,x),
 \qquad Q(k,x)=\Pp\{\Pois(x)\le k-1\}.
\]
The leading term \(\log H(r)\sim-r^4/4\) is the electrostatic cost of
creating a disk of radius \(r\) in a plasma of intensity \(1/\pi\).  At the
largest-gap scale, \(r^4\asymp\log n\), so this term determines the power
\((\log n)^{1/4}\).  It does not, however, determine a nondegenerate limiting
law.  Changing the limiting Gumbel coordinate by a bounded amount changes
the radius by order \(r^{-3}\), while the successive deterministic terms in
\(\log H(r)\) shift that radius by much larger amounts.  The expansion must
therefore be known through an additive \(o(1)\) in the logarithm.

The product also makes visible the source of the different terms.  When
\(k\) lies well below \(x\), \(Q(k,x)\) is an exponentially small lower tail
of a Poisson variable; summing its large-deviation expansion produces the
terms of orders \(x^2\), \(x\log x\), and \(x\).  The transition window
\(k-x=O(\sqrt x)\) is described by the complementary error function and
produces the \(\sqrt x\) term.  Euler--Maclaurin endpoint corrections and the
matching of the large-deviation and transition regimes produce \(\log x\)
and the constant.  Terms with \(k\) well above \(x\) are exponentially
small.  The purpose of the next lemma and Appendix~A is to implement this
decomposition with an error uniform in the auxiliary disk parameter.

\subsection{A compact-uniform finite product}

The needed asymptotic does not follow merely by substituting a shrinking
radius into a theorem stated for a fixed disk. We first record the precise
compact-uniform statement and prove its uniformity in \Cref{app:uniform}.

\begin{lemma}[Compact-uniform one-disk asymptotics]\label{lem:uniformdisk}
For every compact interval \(I\Subset(0,1)\), uniformly for \(t\in I\),
\begin{align}
 \sum_{k=1}^{N}\log Q(k,Nt^2)
={}&-\frac{t^4}{4}N^2-\frac{t^2}{2}N\log N
 +t^2\bigl(1-\log(t\sqrt{2\pi})\bigr)N\notag\\
 &+a_1t\sqrt N+\frac13\log N+\frac23\log t+a_0
 +O_I(N^{-1/12}).\label{eq:uniformdisk}
\end{align}
\end{lemma}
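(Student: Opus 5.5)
Writing \(x=Nt^2\) with \(t\in I\), so that \(x\asymp N\) and \(N/x\in[1/\sup I^2,1/\inf I^2]\), a range strictly above \(1\), I would decompose the sum \(\sum_{k=1}^N\log Q(k,x)\) according to the position of \(k\) relative to \(x\). The main tool is the identity \(Q(k,x)=\Pp\{\Pois(x)\le k-1\}\) together with a uniform saddle-point (Temme-type) representation
\[
 Q(k,x)=\tfrac12\erfc\!\bigl(-\eta\sqrt{k/2}\bigr)-R_k(\eta),\qquad \tfrac12\eta^2=\lambda-1-\log\lambda,\ \lambda=x/k,\ \operatorname{sign}\eta=\operatorname{sign}(1-\lambda),
\]
where \(R_k(\eta)=\frac{e^{-k\eta^2/2}}{\sqrt{2\pi k}}\bigl(c_0(\eta)+O(k^{-1})\bigr)\) uniformly in \(\eta\) on compacts. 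I would split the sum into three ranges. The first is \(k\le x-x^{1/2+\delta}\), the large-deviation regime where \(\log Q(k,x)=-k\phi(k/x)+\text{(explicit log prefactor)}+O(1/k)\). The second is the window \(|k-x|\le x^{1/2+\delta}\), where \(y=(x-k)/\sqrt{2x}\) and \(\log Q\approx\log(\tfrac12\erfc y)+x^{-1/2}\)-corrections. The third is \(x+x^{1/2+\delta}<k\le N\), where \(\log Q(k,x)=O(e^{-cx^{2\delta}})\) because \(N-x\gtrsim N\). The terms with \(k\) near the upper endpoint \(N\) are exponentially negligible uniformly, since \(N\ge (1+c_I)x\). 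This is exactly why \(I\) must stay away from \(1\).

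In the large-deviation range I would apply Euler--Maclaurin in \(k\) to the smooth function \(k\mapsto -k\phi(k/x)-\tfrac12\log(2\pi k)-\log\frac{x-k}{\dots}\), keeping explicit dependence on \(x\). The bulk integral produces the \(-x^2/4\), \(-\tfrac12 x\log N\)-type terms and the \(x(1-\log(t\sqrt{2\pi}))\) term after substituting \(x=Nt^2\) and \(\log x=\log N+2\log t\). The lower endpoint \(k=1\) contributes constants such as \(\zeta'(-1)\) through \(\sum\log k!\)-type sums (Barnes \(G\)), which is where the \(\zeta'(-1)\) in \(a_0\) arises. The upper endpoint of this range has to be matched against the window. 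In the window I would write the sum as a Riemann sum in \(y\) with mesh \((2x)^{-1/2}\), so that \(\sqrt{2x}\int\log(\tfrac12\erfc y)\,\dd y\) gives the \(a_1t\sqrt N\) term (since \(\sqrt{2x}=\sqrt2\,t\sqrt N\)), and the first-order correction terms give the \(J_\pm\) integrals at order one. The renormalizations \(y^2+\log y+\log(2\sqrt\pi)\) and \(\tfrac{11}{3}y^3+\cdots\) in \eqref{eq:a1}, \eqref{eq:Jplus} are exactly the large-\(y\) asymptotics of the window integrand, which cancel against the large-deviation sum's expansion near its endpoint. The \(\frac13\log N+\frac23\log t\) then comes from \(\tfrac13\log x\) plus \(\tfrac13\log(N/x)\)-type leftovers collected from the endpoints.

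Throughout, uniformity in \(t\) is automatic provided every error bound is stated in terms of \(x\) and \(N/x\) with constants depending only on compact ranges of these parameters. This holds because Temme's expansion is uniform in \(\eta\), Euler--Maclaurin remainders involve derivatives bounded by powers of \(x\), and the cutoff \(x^{1/2+\delta}\) is chosen with \(\delta\) fixed. Optimizing \(\delta\) against the window truncation error \(x^{3\delta-1/2}\)-type terms and the large-deviation remainder \(x^{-2\delta}\)-type terms should yield the \(N^{-1/12}\) rate.

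The main obstacle is the matching step. It requires an expansion of \(\log Q(k,x)\) in the window to second order in \(x^{-1/2}\) (cubic in \(y\)), valid up to \(|y|\le x^{\delta}\), whose tails agree term-by-term with the large-deviation expansion. Only then do the intermediate-scale contributions cancel exactly and leave absolutely convergent renormalized integrals. I would handle it by working with a single uniform Temme formula across both regimes rather than two separate expansions. This reduces the matching to bookkeeping of the divergent parts, which is precisely what the renormalized integrands encode.
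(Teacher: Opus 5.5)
Your architecture is the paper's. The proof there is Charlier's block analysis \(S_<,S_0,S_>\), cited rather than redone: an exponentially small block above \(x\), a large-deviation block summed exactly through Barnes \(G\) (the source of \(\zeta'(-1)\)), and an \(\erfc\) transition block producing \(a_1\sqrt x\) and the renormalized \(J_\pm\).

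The paper differs in three ways:
\begin{itemize}
\item It cuts the large-deviation block at a \emph{fixed} relative distance, \(k<x/(1+\epsilon)\). There \(\log Q(k,x)=k\log x-x-\log\Gamma(k)-\log(x-k)-x/(x-k)^2+O(x^{-2})\) is summed in closed form with \(G\), \(\Gamma\), \(\psi_1\).
\item It leaves the range between \(x/(1+\epsilon)\) and \(x/(1+M/\sqrt N)\), with \(M=N^{1/12}\), to Charlier's matching lemmas.
\item It gets uniformity by checking Charlier's constants on \(I\).
\end{itemize}

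Your remark that everything can be written in \(x\) alone is a cleaner route to uniformity. For \(N\ge(1+c)x\) the left side equals \(\log H(\sqrt x)+O(e^{-cx})\), so a proof in the variable \(x\) gives \Cref{thm:palmexpansion} directly. The same observation rules out your \(\tfrac13\log(N/x)\)-type leftovers: \(\tfrac13\log N+\tfrac23\log t=\tfrac13\log x\) identically, and no \(N/x\) dependence can survive.

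Where your proposal does not yet close is the error budget. On your large-deviation range \(k\le x-x^{1/2+\delta}\), the remainder is not \(O(1/k)\). The leading prefactor carries relative corrections \(x/(x-k)^2,\ x^2/(x-k)^4,\ldots\). At \(x-k\approx x^{1/2+\delta}\) these are of size \(x^{-2\delta}\) per summand, and summed over \(x-k\ge x^{1/2+\delta}\) they give \(\asymp x^{1/2-\delta}\to\infty\). Keeping \(J\) of these corrections leaves a summed error \(\asymp x^{1/2-(2J+1)\delta}\).

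On the window side, truncating after the cubic \(x^{-1/2}\)-term leaves a quartic remainder of order \(x^{-1}y^4\) per summand, hence \(\asymp x^{5\delta-1/2}\) in total. With that window truncation, the two errors vanish for a common \(\delta\) only if \(J\ge3\). Then \(\delta=1/12\) produces exactly the \(N^{-1/12}\) of the statement; this is the paper's budget \(M^5/\sqrt N+\sqrt N/M^7\).

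Your exponents \(x^{3\delta-1/2}\) and \(x^{-2\delta}\) are per-summand sizes, not summed errors. Balancing them would give \(x^{-1/5}\), not \(N^{-1/12}\), which shows the bookkeeping has not actually been done. Keeping the uniform Temme form through the matching zone, as you suggest at the end, can replace the truncated series. However, the asymptotic evaluation of the resulting sum then needs the tail expansion of \(\log\erfc\) to the same depth. That computation is the content of the matching lemmas the paper cites, and your proposal asserts it rather than carries it out.
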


\begin{proof}[Proof of \Cref{thm:palmexpansion}]
First consider the ordinary hole product
\(H(r)=\prod_{k\ge1}Q(k,r^2)\), and put \(x=r^2\). Choose
\(N=\lceil4x\rceil\) and \(t=(x/N)^{1/2}\). Then \(Nt^2=x\) and \(t\) lies
in a fixed compact subinterval of \((0,1)\). For \(k>N\ge4x\), the integer
gamma--Poisson identity and Chernoff's bound give
\[
 1-Q(k,x)=\Pp\{\Pois(x)\ge k\}
 \le e^{-x}\left(\frac{ex}{k}\right)^k.
\]
Consequently
\begin{equation}\label{eq:tailproduct}
 \sum_{k>N}|\log Q(k,x)|=O(e^{-c x})
\end{equation}
for an absolute \(c>0\). Apply \Cref{lem:uniformdisk} and substitute
\(x=Nt^2\). The cancellation uses
\[
 t\sqrt N=\sqrt x,\qquad
 -\frac{x}{2}\log N-x\log t=-\frac{x}{2}\log x,
 \qquad
 \frac13\log N+\frac23\log t=\frac13\log x.
\]
Thus
\begin{align}
 \log H(r)={}&-\frac{x^2}{4}-\frac{x}{2}\log x
 +\left(1-\frac12\log(2\pi)\right)x
 +a_1\sqrt x+\frac13\log x+a_0+O(x^{-1/12})\notag\\
={}&-\frac{r^4}{4}-r^2\log r
 +\left(1-\frac12\log(2\pi)\right)r^2
 +a_1r+\frac23\log r+a_0+O(r^{-1/6}).\label{eq:Hexpansion}
\end{align}
Finally, \eqref{eq:palmordinary} adds \(r^2\) to \eqref{eq:Hexpansion},
which is precisely \eqref{eq:palmexpansion}.
\end{proof}

\begin{remark}[Ordinary versus anchored holes]\label{rem:holecomparison}
The ordinary and reduced-Palm expansions differ by exactly \(r^2\).  In
particular,
\[
 \log H(r)=-\frac{r^4}{4}-r^2\log r
 +\left(1-\frac12\log(2\pi)\right)r^2
 +a_1r+\frac23\log r+a_0+O(r^{-1/6}),
\]
whereas \(\log q(r)=\log H(r)+r^2\).  Both have the same electrostatic
leading cost and hence the same fourth-root scale, but they yield different
\(\sqrt{\log n}\)-order contributions to the fourth-power centering.  This
is why an ordinary-hole computation cannot be substituted into the
nearest-neighbour problem.
\end{remark}

For use in the local analysis and the deterministic inversion, define
\begin{equation}\label{eq:Phi}
 \Phi(r)=\frac{r^4}{4}+r^2\log r-a_2r^2-a_1r
 -\frac23\log r-a_0.
\end{equation}

\begin{lemma}[Local variation of the transformed tail]
\label{lem:localtailvariation}
Let \(r\to\infty\) and let \(h=h(r)\) satisfy \(h=O(r^{-3})\).  Then
\begin{equation}\label{eq:localtailvariation}
 \Psi(r+h)-\Psi(r)=r^3h+o(1).
\end{equation}
More generally, if \(r_n,s_n\to\infty\), both are of the same order, and
\(\Phi(s_n)-\Phi(r_n)\to y\), then
\(\Psi(s_n)-\Psi(r_n)\to y\).
\end{lemma}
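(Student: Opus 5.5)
The plan is to reduce both statements to \Cref{thm:palmexpansion}. That theorem says precisely that \(\Psi=\Phi+E\), where \(E(r)=O(r^{-1/6})\to0\) as \(r\to\infty\). We then do the remaining work on the explicit smooth function \(\Phi\) from \eqref{eq:Phi}.

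For the first assertion, write
\[
 \Psi(r+h)-\Psi(r)=\Phi(r+h)-\Phi(r)+E(r+h)-E(r).
\]
Since \(h=O(r^{-3})\to0\), both \(r\) and \(r+h\) tend to infinity, so the error terms contribute \(o(1)\). For the main term we use the mean value theorem: \(\Phi(r+h)-\Phi(r)=h\,\Phi'(\rho)\) for some \(\rho\) between \(r\) and \(r+h\). Differentiating \eqref{eq:Phi} gives
\[
 \Phi'(\rho)=\rho^3+2\rho\log\rho+\rho-2a_2\rho-a_1-\tfrac{2}{3\rho},
\]
so \(\Phi'(\rho)=\rho^3+O(\rho\log\rho)\). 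Because \(\rho=r+O(r^{-3})\), we also have \(\rho^3=r^3+O(r^{-1})\). Multiplying by \(h=O(r^{-3})\) yields
\[
 h\Phi'(\rho)=r^3h+O(r^{-3}\cdot r\log r)+O(r^{-4})=r^3h+o(1),
\]
which is \eqref{eq:localtailvariation}.

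The second assertion is even more direct. We have
\[
 \Psi(s_n)-\Psi(r_n)=\Phi(s_n)-\Phi(r_n)+E(s_n)-E(r_n).
\]
Since \(s_n,r_n\to\infty\), both error terms are \(O(s_n^{-1/6})+O(r_n^{-1/6})=o(1)\). Hence \(\Psi(s_n)-\Psi(r_n)\to y\). The hypothesis that \(r_n\) and \(s_n\) are of the same order is not needed for this step; only divergence to infinity is used.

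I do not expect a real obstacle, since all the analytic content sits in \Cref{thm:palmexpansion}. The one point that needs care is that the \(O(r^{-1/6})\) error must be applied at two different arguments. This is legitimate because the bound is uniform once the argument is large, and both arguments tend to infinity.
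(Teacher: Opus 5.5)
Your proof is correct and matches the paper's argument: both use Theorem~\ref{thm:palmexpansion} to write $\Psi=\Phi+O(r^{-1/6})$, which gives the second assertion at once. For the first assertion you apply the mean value theorem with $\Phi'(\rho)=\rho^3+O(\rho\log\rho)$, where the paper instead uses a second-order Taylor bound $O(r^2h^2)$ from $\Phi''(r)=3r^2+O(\log r)$. You are also right that the ``same order'' hypothesis is never used; the paper's proof likewise needs only $r_n,s_n\to\infty$.
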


\begin{proof}
By \Cref{thm:palmexpansion} and the definition \eqref{eq:Phi},
\[
 \Psi(s)-\Psi(r)=\Phi(s)-\Phi(r)+O(r^{-1/6}+s^{-1/6})
\]
whenever \(r,s\to\infty\).  This proves the second assertion.  For the
first, Taylor's theorem and
\[
 \Phi'(r)=r^3+2r\log r+(1-2a_2)r-a_1-\frac{2}{3r},
 \qquad \Phi''(r)=3r^2+O(\log r),
\]
give
\[
 \Phi(r+h)-\Phi(r)=\Phi'(r)h+O(r^2h^2)=r^3h+o(1).
\]
Combining the two displays proves \eqref{eq:localtailvariation}.
\end{proof}

The convergence of the integrals in \eqref{eq:a1}--\eqref{eq:Jplus}
follows from \(\erfc(-y)=2-\erfc(y)\) and
\begin{equation}\label{eq:erfctail}
 \erfc(y)=\frac{e^{-y^2}}{\sqrt\pi\,y}
 \left(1-\frac1{2y^2}+\frac3{4y^4}+O(y^{-6})\right),
 \qquad y\to+\infty.
\end{equation}
At \(-\infty\) the integrands are exponentially small; at \(+\infty\), the
two renormalized integrands in \eqref{eq:a1} and \eqref{eq:Jplus} are
respectively \(O(y^{-2})\) and \(O(y^{-3})\).

\section{Uniform comparison of finite and infinite Palm holes}

In this section \(r\) again denotes a physical radius in the unit-disk
coordinates; its microscopic counterpart is \(\sqrt n\,r\).
The Poisson theorem of Lopatto--Otto uses the finite-\(n\) reduced-Palm hole
at a location-dependent radius.  Our tail transform is instead defined by
the stationary infinite process.  We now show that these two holes agree to
relative exponential accuracy throughout the bulk and throughout the entire
range of radii relevant to the maximum.  Relative accuracy is essential:
both determinants themselves are of order \(n^{-1}\), so an absolute
\(o(1)\) estimate would contain no useful information.

\subsection{Two operator estimates}

\begin{lemma}[A relative determinant inequality]
\label{lem:abstractdetcomparison}
Let \(A,C\) be positive trace-class contractions on a Hilbert space, assume
\(0\le A\le C\) and \(\|C\|<1\), and put \(D=C-A\).  Then
\begin{equation}\label{eq:abstractdetcomparison}
 0\le \log\frac{\det(I-A)}{\det(I-C)}
 \le \|(I-C)^{-1}\|\,\Tr D.
\end{equation}
\end{lemma}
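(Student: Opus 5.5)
The plan is to interpolate linearly between \(A\) and \(C\) and differentiate the log-determinant along the path. For \(s\in[0,1]\) put \(A_s=A+sD\). Then \(0\le A\le A_s\le C\) because \(D\ge0\), so
\[
 \|A_s\|\le\|C\|<1,
\]
and every \(I-A_s\) is invertible. Each \(A_s\) is trace class, so \(f(s)=\log\det(I-A_s)\) is well defined and real. The standard formula for the derivative of a Fredholm determinant then gives
\[
 f'(s)=-\Tr\bigl((I-A_s)^{-1}D\bigr).
\]
Integrating over \([0,1]\) yields
\[
 \log\frac{\det(I-A)}{\det(I-C)}=f(0)-f(1)=\int_0^1\Tr\bigl((I-A_s)^{-1}D\bigr)\dd s.
\]

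The key steps, in order, are these. First, rewrite the trace symmetrically as
\[
 \Tr\bigl((I-A_s)^{-1}D\bigr)=\Tr\bigl(D^{1/2}(I-A_s)^{-1}D^{1/2}\bigr).
\]
This is legitimate because \(D^{1/2}\) is Hilbert--Schmidt and \((I-A_s)^{-1}\) is bounded. Since \((I-A_s)^{-1}\) is positive, the operator \(D^{1/2}(I-A_s)^{-1}D^{1/2}\) is positive, so its trace is nonnegative. That gives the lower bound \(0\) in \eqref{eq:abstractdetcomparison}.

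Second, bound the resolvent uniformly in \(s\). From \(A_s\le C\) we get \(I-A_s\ge I-C\ge(1-\|C\|)I>0\). Operator monotonicity of inversion on positive invertible operators then gives \((I-A_s)^{-1}\le(I-C)^{-1}\). Hence
\[
 D^{1/2}(I-A_s)^{-1}D^{1/2}\le\|(I-C)^{-1}\|\,D,
\]
and taking traces gives \(\Tr\bigl((I-A_s)^{-1}D\bigr)\le\|(I-C)^{-1}\|\Tr D\). Integrating this over \(s\in[0,1]\) produces the upper bound.

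The main technical point is justifying the derivative formula for \(\log\det\) in the trace-class setting. To do it cleanly, write
\[
 I-A_{s+h}=(I-A_s)\bigl(I-h(I-A_s)^{-1}D\bigr),
\]
apply multiplicativity of Fredholm determinants, and use \(\det(I-hT)=1-h\Tr T+O(h^2)\) for trace-class \(T\). A possible alternative is to reduce to finite rank: approximate by compressions to finite-dimensional spectral subspaces and use continuity of \(\det\) in trace norm. The derivative route is more direct, and the needed facts are standard (e.g.\ Simon's trace-ideals book).
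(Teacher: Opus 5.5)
Your proof is correct and follows the paper's argument: the same linear interpolation between \(A\) and \(C\), the same formula \(-\Tr\bigl((I-A_s)^{-1}D\bigr)\) for the derivative of the log-determinant, and the same resolvent bound \((I-A_s)^{-1}\le(I-C)^{-1}\). The paper instead applies Jacobi's formula in finite dimension and passes to trace class through compressions to spectral projections, whereas you differentiate the Fredholm determinant directly via multiplicativity; both routes work, and your symmetrization \(D^{1/2}(I-A_s)^{-1}D^{1/2}\) justifies the nonnegativity of the trace, which the paper only asserts.
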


\begin{proof}
For \(0\le t\le1\), set \(C_t=C-tD\).  Then
\(0\le C_t\le C\), so \(I-C_t\) is invertible and
\((I-C_t)^{-1}\le(I-C)^{-1}\) in the operator order.  In finite dimension,
Jacobi's formula gives
\[
 \frac{\dd}{\dd t}\log\det(I-C_t)
 =\Tr[(I-C_t)^{-1}D].
\]
The trace is nonnegative, and
\[
 \Tr[(I-C_t)^{-1}D]
 \le\|(I-C)^{-1}\|\Tr D.
\]
Integration from zero to one proves \eqref{eq:abstractdetcomparison} in
finite dimension.  For trace-class operators, apply the finite-dimensional
argument to spectral projections increasing strongly to the identity.
Trace-norm convergence of the compressions, continuity of the Fredholm
determinant, and monotone convergence of the traces pass both inequalities
to the limit.
\end{proof}

\begin{lemma}[Bulk truncation of the Palm projection]
\label{lem:palmtailkernel}
Fix \(s<1\) and \(C_0<\infty\).  There exist \(c,C>0\) such that, uniformly
for \(|a|\le s\), \(0\le r\le C_0n^{-1/2}(\log n)^{1/4}\), and
\(w\in D_r(a)\),
\begin{align}
 0&\le K_{\infty,n}^a(w,w)-K_n^a(w,w)\le Cne^{-cn},
 \label{eq:palmtaildiagonal}\\
 0&\le K_{\infty,n}(a,a)-K_n(a,a)\le Cne^{-cn}.
 \label{eq:untaildiagonal}
\end{align}
Consequently, for \(D=D_r(a)\),
\begin{equation}\label{eq:palmtailtrace}
 \Tr\!\left[\one_D(K_{\infty,n}^a-K_n^a)\one_D\right]
 \le Cnr^2e^{-cn}.
\end{equation}
\end{lemma}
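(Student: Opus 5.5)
The plan is to write both Palm kernels as projections in the common Fock space and to express their difference as the projection onto a single explicit vector. Since \(\mathcal H_n^a\subset\mathcal H_{\infty,n}^a\), the difference \(P:=K_{\infty,n}^a-K_n^a\) is the orthogonal projection onto \(\mathcal H_{\infty,n}^a\ominus\mathcal H_n^a\). This already gives the lower bound in \eqref{eq:palmtaildiagonal}. The same reasoning applied to \(\mathcal H_n\subset\mathcal H_{\infty,n}\) gives the lower bound in \eqref{eq:untaildiagonal}. For the upper bounds I will use the tail kernel \(T:=K_{\infty,n}-K_n=\sum_{j\ge n}\varphi_{j,n}\otimes\overline{\varphi_{j,n}}\). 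Directly,
\[
 T(w,w)=\frac n\pi e^{-n|w|^2}\sum_{j\ge n}\frac{(n|w|^2)^j}{j!}
 =\frac n\pi\Pp\{\Pois(n|w|^2)\ge n\}.
\]
Whenever \(|w|\le s'<1\), a Chernoff bound makes this at most \(Cne^{-cn}\). Taking \(w=a\) gives \eqref{eq:untaildiagonal}. For \(w\in D_r(a)\) we have \(|w|\le s+o(1)\), because \(r\to0\); so the same bound holds uniformly in \(w\) with \(s'=(1+s)/2\).

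For the Palm diagonal, expand \eqref{eq:Palmkernel} for both kernels:
\[
 K_{\infty,n}^a(w,w)-K_n^a(w,w)
 =T(w,w)-\frac{|K_{\infty,n}(w,a)|^2}{K_{\infty,n}(a,a)}+\frac{|K_n(w,a)|^2}{K_n(a,a)}.
\]
I will bound the last two terms together by \(Cne^{-cn}\). Write \(K_{\infty,n}(w,a)=K_n(w,a)+T(w,a)\). By Cauchy--Schwarz, \(|T(w,a)|\le T(w,w)^{1/2}T(a,a)^{1/2}\le Cne^{-cn}\). Also \(|K_n(w,a)|\le K_n(w,w)^{1/2}K_n(a,a)^{1/2}\le n/\pi\), and both \(K_n(a,a)\) and \(K_{\infty,n}(a,a)=n/\pi\) are at least \(cn\) by \eqref{eq:bulkintensityprelim}. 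Expanding the difference of quotients,
\[
 \frac{|K_n(w,a)|^2}{K_n(a,a)}-\frac{|K_n(w,a)+T(w,a)|^2}{K_{\infty,n}(a,a)}
\]
then contributes \(|K_n(w,a)|^2\,T(a,a)/(K_nK_{\infty,n})(a,a)\le Cne^{-cn}\), plus cross terms of size \(O(|K_n(w,a)||T(w,a)|/n)\) and \(O(|T(w,a)|^2/n)\), all at most \(Cne^{-cn}\). Adjusting \(c\) and \(C\) yields the upper bound in \eqref{eq:palmtaildiagonal}.

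Finally, since \(P\ge0\), the operator \(\one_D P\one_D\) is a positive trace-class operator, and its trace equals \(\int_D P(w,w)\dd^2w\). Integrating \eqref{eq:palmtaildiagonal} over \(D\), which has area \(\pi r^2\), gives \eqref{eq:palmtailtrace}. The main point to handle carefully is the uniform lower bound on \(K_n(a,a)\) together with the Chernoff bound holding uniformly on \(|w|\le s'\). Both are elementary here because \(r\) shrinks, so \(D_r(a)\) stays inside a fixed disk \(D_{s'}\) for all large \(n\); small \(n\) are absorbed into \(C\).
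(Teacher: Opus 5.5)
Your proof is correct and follows the paper's argument essentially step for step. The steps match: projection inclusion gives the lower bounds, a Chernoff bound plus Cauchy--Schwarz controls the positive tail kernel \(K_{\infty,n}-K_n\) in a fixed bulk disk \(D_{s'}\), the two rank-one Palm corrections are compared using \(K_n(a,a)\gtrsim n\), and the diagonal is integrated over \(D_r(a)\). One harmless slip: your opening claim that the difference of Palm kernels is a projection onto a single vector is false, since \(\mathcal H_{\infty,n}^a\ominus\mathcal H_n^a\) is infinite-dimensional, but nothing in your argument uses it.
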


\begin{proof}
Choose \(s<s'<1\). Since \(r=o(1)\), the disk \(D_r(a)\) is
contained in \(D_{s'}\) for all large \(n\), uniformly over the
stated range.  The projection remainder
\[
 E_n=K_{\infty,n}-K_n
 =\sum_{j=n}^{\infty}\varphi_{j,n}\otimes\varphi_{j,n}
\]
is positive.  Its diagonal is
\[
 E_n(w,w)=\frac n\pi e^{-n|w|^2}
 \sum_{j=n}^{\infty}\frac{(n|w|^2)^j}{j!}
 =\frac n\pi\Pp\{\Pois(n|w|^2)\ge n\}.
\]
For \(|w|\le s'\), the exponential Markov inequality, optimized at
\(\theta=\log(1/|w|^2)\), yields
\[
 \Pp\{\Pois(n|w|^2)\ge n\}
 \le\exp\{-n[\log(s'^{-2})-1+s'^2]\}.
\]
The bracket is strictly positive.  Positivity of \(E_n\) and the
two-by-two principal minor inequality give
\begin{equation}\label{eq:EnCSexpanded}
 |E_n(z,w)|^2\le E_n(z,z)E_n(w,w),
\end{equation}
hence the same exponential estimate holds off the diagonal in the bulk.

It remains to compare the rank-one terms in the Palm formula.  Set
\[
 \alpha=K_{\infty,n}(a,a),\quad\beta=K_n(a,a),\quad
 u=K_{\infty,n}(w,a),\quad v=K_n(w,a).
\]
Here \(\alpha=n/\pi\), \(\beta\ge n/(2\pi)\) for large \(n\),
\(|u|,|v|\le n/\pi\), and \eqref{eq:EnCSexpanded} gives
\(|u-v|+|\alpha-\beta|\le Cne^{-cn}\), after changing \(c\). Therefore
\begin{align*}
 \left|\frac{|u|^2}{\alpha}-\frac{|v|^2}{\beta}\right|
 &\le\frac{(|u|+|v|)|u-v|}{\alpha}
 +\frac{|v|^2|\alpha-\beta|}{\alpha\beta}
 \le Cne^{-cn}.
\end{align*}
Subtracting the two Palm kernels proves the upper bound in
\eqref{eq:palmtaildiagonal}; the lower bound follows from the projection
inclusion \(\mathcal H_n^a\subset\mathcal H_{\infty,n}^a\). Equation
\eqref{eq:untaildiagonal} is the unconditioned diagonal estimate at \(a\).
Finally, a positive continuous-kernel operator has trace equal to the
integral of its diagonal.  Integration over \(D_r(a)\) proves
\eqref{eq:palmtailtrace}.
\end{proof}

\subsection{Relative comparison of the hole probabilities}

\begin{lemma}[Relative Palm determinant comparison]\label{lem:palmcompare}
Fix \(s<1\) and \(C_0<\infty\). There are \(c,C>0\) such that, uniformly for
\(|a|\le s\) and \(0\le r\le C_0n^{-1/2}(\log n)^{1/4}\),
\begin{equation}\label{eq:palmcompare}
 0\le\log\frac{q_n(a,r)}{q(\sqrt n\,r)}\le Ce^{-cn+nr^2}.
\end{equation}
In particular, \(q_n(a,r)/q(\sqrt n\,r)=1+O(e^{-c'n})\) uniformly in this range.
Also
\begin{equation}\label{eq:densitycompare}
 K_n(a,a)=\frac n\pi\bigl(1+O(e^{-c'n})\bigr).
\end{equation}
\end{lemma}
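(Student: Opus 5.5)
The plan is to apply \Cref{lem:abstractdetcomparison} with \(A=\one_DK_n^a\one_D\) and \(C=\one_DK_{\infty,n}^a\one_D\), where \(D=D_r(a)\). We have \(q_n(a,r)=\det(I-A)\) by \eqref{eq:genericvoiddet} applied to the reduced-Palm process. Also \(q(\sqrt n\,r)=\det(I-C)\) by \eqref{eq:scaledpalmproduct} together with magnetic translation invariance. The operator order \(0\le A\le C\) comes from the projection inclusion \(\mathcal H_n^a\subset\mathcal H_{\infty,n}^a\): \(K_n^a\le K_{\infty,n}^a\) as projections, and compressing by \(\one_D\) preserves the order. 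Both operators are trace class, since their diagonals are bounded on the bounded set \(D\).

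Lemma~\ref{lem:abstractdetcomparison} then gives
\[
0\le\log\frac{q_n(a,r)}{q(\sqrt n\,r)}\le\|(I-C)^{-1}\|\,\Tr(C-A),
\]
and \eqref{eq:palmtailtrace} bounds the trace by \(Cnr^2e^{-cn}\). It remains to control \(\|(I-C)^{-1}\|=(1-\|C\|)^{-1}\). By the computation in \Cref{lem:palmproduct}, after translating to the origin and dilating, the spectrum of \(C\) is \(\{\gamma(j+1,nr^2)/j!:j\ge1\}\). The largest element is the \(j=1\) value, since \(\gamma(j+1,x)/j!=\Pp\{\Pois(x)\ge j+1\}\) decreases in \(j\). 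Hence
\[
1-\|C\|=Q(2,nr^2)=e^{-nr^2}(1+nr^2)\ge e^{-nr^2},
\]
so \(\|(I-C)^{-1}\|\le e^{nr^2}\). Combining the two bounds gives
\[
\log\frac{q_n}{q}\le Cnr^2e^{-cn+nr^2}.
\]
Since \(nr^2\le C_0^2(\log n)^{1/2}\), the prefactor \(nr^2\) can be absorbed by shrinking \(c\) slightly, which yields \eqref{eq:palmcompare}. Moreover \(nr^2=o(n)\), so the right-hand side is \(O(e^{-c'n})\), and the ratio statement follows.

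The density statement \eqref{eq:densitycompare} is immediate from \eqref{eq:untaildiagonal} and \(K_{\infty,n}(a,a)=n/\pi\). The main obstacle is the norm bound on \((I-C)^{-1}\), because the determinants themselves are tiny and only a relative bound is useful. This is resolved by the explicit radial eigenvalues together with the growth restriction on \(r\). One subtlety must be checked: that the Palm kernel eigenvalues at a general \(a\) coincide with those at \(0\). This follows because the magnetic translation conjugates \(K_{\infty,n}\) and maps the rank-one subtraction at \(a\) to the subtraction at \(0\).
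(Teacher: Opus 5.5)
Your proposal is correct and follows the paper's proof essentially step for step. You use the same compressions $A\le C$ coming from the projection inclusion, \Cref{lem:abstractdetcomparison} combined with the trace bound \eqref{eq:palmtailtrace}, and the resolvent norm read off from the explicit radial spectrum via magnetic translation; the only cosmetic difference is that the paper keeps the exact value $\|(I-C)^{-1}\|=e^{nr^2}/(1+nr^2)$, whose denominator cancels the factor $nr^2$ from the trace, whereas you drop $(1+nr^2)^{-1}$ and absorb $nr^2\le C_0^2(\log n)^{1/2}$ by shrinking $c$, which is equally valid.
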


\begin{proof}
All operators in this proof are in the unit-disk, variance-\(1/n\)
coordinates of \eqref{eq:Kn}. Since
\(\mathcal H_n^a\subset\mathcal H_{\infty,n}^a\), their orthogonal projections
satisfy \(K_n^a\le K_{\infty,n}^a\). For \(D=D_r(a)\), set
\[
 A=\one_DK_n^a\one_D,\qquad C=\one_DK_{\infty,n}^a\one_D,
 \qquad \mathcal D=C-A.
\]
Then \(0\le A\le C\), \(\mathcal D\ge0\). The strict contraction property
\(\|C\|<1\) follows from the explicit spectrum computed below. Consequently,
\begin{equation}\label{eq:detorder}
 q_n(a,r)=\det(I-A)\ge\det(I-C)=q(\sqrt n\,r).
\end{equation}

By \Cref{lem:palmtailkernel},
\begin{equation}\label{eq:Dtrace}
 \|\mathcal D\|_1=\Tr\mathcal D\le Cnr^2e^{-cn}.
\end{equation}

Magnetic translation is a unitary equivalence between \(C\) and the
restriction of the microscopic reduced-Palm kernel at \(0\) to
\(D_{\sqrt n r}(0)\). By the proof
of \Cref{lem:palmproduct}, its eigenvalues are
\(\lambda_j=\gamma(j+1,nr^2)/j!\), \(j\ge1\). The gamma--Poisson identity
gives
\[
 \lambda_j=\Pp\{\Pois(nr^2)\ge j+1\},
\]
so they decrease with \(j\), and all are strictly below one.
Hence
\begin{equation}\label{eq:resolvent}
 \|(I-C)^{-1}\|=\frac1{1-\lambda_1}
 =\frac{e^{nr^2}}{1+nr^2}.
\end{equation}
Apply \Cref{lem:abstractdetcomparison}, \eqref{eq:Dtrace}, and
\eqref{eq:resolvent} to obtain
\[
 0\le\log\frac{q_n(a,r)}{q(\sqrt n\,r)}
 \le\frac{e^{nr^2}}{1+nr^2}\,Cnr^2e^{-cn}
 \le Ce^{-cn+nr^2}.
\]
This proves \eqref{eq:palmcompare}. Equation \eqref{eq:densitycompare} is
\eqref{eq:untaildiagonal}.
\end{proof}

\section{Proofs of the extreme-value theorems}

To shorten the proofs only, set
\[
 L_n:=\log\frac{n|B|}{\pi}.
\]

\subsection{From adaptive radii to a common threshold}

The main probabilistic input concerns the spatially varying radius
\(R_{n,\kappa}(a)\), whereas the maximum in \eqref{eq:Mn} is evaluated at a
common radius.  The following deterministic observation isolates the
sandwich argument used to pass between the two.

For \(t\ge0\), write
\begin{equation}\label{eq:exceedancecount}
 N_n(t;B)=\#\{a\in\xi_n\cap B:d_n(a)>t\}.
\end{equation}

\begin{lemma}[Uniform threshold transfer]\label{lem:thresholdtransfer}
Let \(t_n\ge0\), \(\kappa>0\), and suppose that
\begin{equation}\label{eq:thresholdtransferassumption}
 \sup_{a\in B}
 \left|K_n(a,a)q_n(a,t_n)-\kappa\right|\longrightarrow0.
\end{equation}
Then
\begin{equation}\label{eq:thresholdtransfervoid}
 \Pp\{N_n(t_n;B)=0\}\longrightarrow e^{-\kappa|B|},
\end{equation}
and, for every fixed \(k\ge1\),
\begin{equation}\label{eq:thresholdtransferorder}
 \Pp\{N_n(t_n;B)\le k-1\}
 \longrightarrow e^{-\kappa|B|}
 \sum_{j=0}^{k-1}\frac{(\kappa|B|)^j}{j!}.
\end{equation}
\end{lemma}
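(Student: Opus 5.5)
The plan is a sandwich argument. We compare the common-threshold count \(N_n(t_n;B)\) with the adaptive counts \(N_{n,\kappa\pm\delta}\) of \Cref{prop:LO}, for a small fixed \(\delta\in(0,\kappa)\). By hypothesis \eqref{eq:thresholdtransferassumption}, for all large \(n\) and every \(a\in B\),
\[
 \kappa-\delta<K_n(a,a)q_n(a,t_n)<\kappa+\delta .
\]
Since \(r\mapsto K_n(a,a)q_n(a,r)\) is strictly decreasing (\Cref{lem:qnmonotone}), this pins the common threshold between the two adaptive radii:
\[
 R_{n,\kappa+\delta}(a)<t_n<R_{n,\kappa-\delta}(a),\qquad a\in B .
\]
Retention of an anchor means \(d_n(a)>\text{radius}\). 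So on \(B\) the processes are pointwise nested:
\[
 \Xi_{n,\kappa-\delta}\cap B\ \subseteq\ \{a\in\xi_n\cap B:d_n(a)>t_n\}\ \subseteq\ \Xi_{n,\kappa+\delta}\cap B .
\]
Consequently
\[
 N_{n,\kappa-\delta}(B)\le N_n(t_n;B)\le N_{n,\kappa+\delta}(B)
\]
surely, for all large \(n\).

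Next, \Cref{prop:LO} applies with \(A=B\). This is legitimate because \(\overline B\subset\{|z|<1\}\) gives \(\sup_B|z|<1\), and the proposition holds for each fixed level \(\kappa\pm\delta>0\). It yields \(N_{n,\kappa\pm\delta}(B)\Rightarrow\Pois((\kappa\pm\delta)|B|)\). For fixed \(k\), the nesting gives
\[
 \Pp\{N_{n,\kappa+\delta}(B)\le k-1\}\le\Pp\{N_n(t_n;B)\le k-1\}\le\Pp\{N_{n,\kappa-\delta}(B)\le k-1\}.
\]
Taking \(\liminf\) and \(\limsup\) traps the limit points between \(F_k((\kappa+\delta)|B|)\) and \(F_k((\kappa-\delta)|B|)\), where
\[
 F_k(\mu)=e^{-\mu}\sum_{j<k}\frac{\mu^j}{j!}.
\]
Since \(F_k\) is continuous in \(\mu\), letting \(\delta\downarrow0\) gives \eqref{eq:thresholdtransferorder}. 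The case \(k=1\) is \eqref{eq:thresholdtransfervoid}.

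The only delicate point is strictness and uniqueness in the radius comparison, and this is exactly what \Cref{lem:qnmonotone} supplies. If \(K_n(a,a)q_n(a,t_n)>\kappa-\delta=K_n(a,a)q_n(a,R_{n,\kappa-\delta}(a))\), then strict decrease forces \(t_n<R_{n,\kappa-\delta}(a)\), and the other side is symmetric. One also needs \(n\) large enough that the adaptive radii exist, which is guaranteed since \(K_n(a,a)\gtrsim n/(2\pi)\) exceeds \(\kappa+\delta\).

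I expect no real obstacle. The only subtlety worth flagging is that the Lopatto--Otto input is invoked at two fixed levels \(\kappa\pm\delta\), with constants depending on them, and never uniformly in \(\delta\). This is consistent with the pointwise-in-\(\kappa\) use emphasized after \Cref{prop:LO}.
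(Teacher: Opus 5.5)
Your proposal is correct and is essentially the paper's own proof. Both trap the common-threshold count between the Lopatto--Otto adaptive counts at two nearby levels, using strict monotonicity of $q_n(a,\cdot)$ to nest the retained sets; you perturb additively ($\kappa\pm\delta$) where the paper uses $\kappa(1\pm\varepsilon)$, and both finish by letting the perturbation vanish via continuity of the Poisson distribution function in the mean.
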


\begin{proof}
Fix \(\varepsilon\in(0,1)\).  By
\eqref{eq:thresholdtransferassumption}, for all sufficiently large \(n\),
uniformly in \(a\in B\),
\[
 \kappa(1-\varepsilon)<K_n(a,a)q_n(a,t_n)
 <\kappa(1+\varepsilon).
\]
Because \(r\mapsto q_n(a,r)\) is strictly decreasing,
\[
 R_{n,\kappa(1+\varepsilon)}(a)
 \le t_n\le R_{n,\kappa(1-\varepsilon)}(a).
\]
An anchor retained at the larger radius is also retained at the smaller
radius.  Hence, with the notation of \Cref{prop:LO},
\[
 N_{n,\kappa(1+\varepsilon)}(B)
 \ge N_n(t_n;B)
 \ge N_{n,\kappa(1-\varepsilon)}(B).
\]
Apply \Cref{prop:LO} to the outside counts.  Their limiting means are
\(\kappa(1+\varepsilon)|B|\) and
\(\kappa(1-\varepsilon)|B|\).  The Poisson probabilities of the events
\(\{0\}\) and \(\{0,\ldots,k-1\}\) are continuous functions of the mean.
Letting \(\varepsilon\downarrow0\) gives
\eqref{eq:thresholdtransfervoid} and
\eqref{eq:thresholdtransferorder}.
\end{proof}

We shall also use the elementary moving-argument consequence of convergence
of distribution functions.  If \(F_n(y)\to F(y)\) for every \(y\), and
\(F\) is continuous, then for every deterministic \(y_n\to y\),
\begin{equation}\label{eq:movingargument}
 F_n(y_n)\longrightarrow F(y).
\end{equation}
Indeed, for each \(\delta>0\), monotonicity gives
\(F_n(y-\delta)\le F_n(y_n)\le F_n(y+\delta)\) for all large \(n\);
first let \(n\to\infty\), then \(\delta\downarrow0\).  This small point is
needed below when the deterministic inversion produces \(x+o(1)\) rather
than exactly \(x\).

\begin{proof}[Proof of \Cref{thm:main}]
Fix \(x\in\R\), and choose \(s<1\) with \(\overline B\subset D_s\). For
all sufficiently large \(n\), let
\begin{equation}\label{eq:tn}
 t_n(x)=\frac1{\sqrt n}\Psi^{-1}(L_n+x),\qquad
 \kappa_x=\frac{e^{-x}}{|B|}.
\end{equation}
By definition,
\begin{equation}\label{eq:qtn}
 q(\sqrt n\,t_n(x))=\frac{\pi e^{-x}}{n|B|}.
\end{equation}
The leading term in \eqref{eq:palmexpansion} gives
\(n^2t_n(x)^4=4L_n(1+o(1))\), and in particular
\(t_n(x)=O(n^{-1/2}(\log n)^{1/4})\). Therefore
\Cref{lem:palmcompare}, \eqref{eq:densitycompare}, and \eqref{eq:qtn} imply
\begin{equation}\label{eq:thresholduniform}
 \sup_{a\in B}
 \left|K_n(a,a)q_n(a,t_n(x))-\kappa_x\right|=o(1).
\end{equation}

Apply \Cref{lem:thresholdtransfer} with \(t_n=t_n(x)\) and
\(\kappa=\kappa_x\).  Since \(\kappa_x|B|=e^{-x}\) and
\(\{M_n(B)\le t_n(x)\}=\{N_n(t_n(x);B)=0\}\), this proves
\eqref{eq:mainlimit}. Open versus closed
balls and strict versus weak inequalities do not affect the result because
the joint eigenvalue law has a density.
\end{proof}

\subsection{Deterministic inversion of the hole exponent}

\begin{lemma}[Deterministic inversion through order one]
\label{lem:inversion}
Let \(L\to\infty\), put \(u=2\sqrt L\), \(\ell=\log u\), and define
\begin{equation}\label{eq:betacompact}
 \beta=\beta(L):=u^2-2u\ell+4a_2u+4a_1u^{1/2}
 +2\ell^2+\left(\frac{10}{3}-8a_2\right)\ell
 +8a_2^2-4a_2+4a_0.
\end{equation}
Uniformly for \(x\) in a fixed compact subset of \(\R\),
\begin{equation}\label{eq:inversionresidual}
 \Phi\bigl((\beta+4x)^{1/4}\bigr)
 =L+x+O\!\left(\frac{\ell^3}{\sqrt u}\right).
\end{equation}
\end{lemma}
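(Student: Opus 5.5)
Write \(\beta_x:=\beta+4x\) and \(r:=\beta_x^{1/4}\), so that \(r^4=\beta_x\). The plan is a direct substitution: express every term of \(\Phi(r)\) in powers of \(u\), \(u^{1/2}\) and \(\ell\), and check that everything cancels through order one. Since \(\beta_x=u^2\bigl(1+\epsilon\bigr)\) with
\[
 \epsilon=\frac{-2u\ell+4a_2u+4a_1u^{1/2}+O(\ell^2)}{u^2}=O(\ell/u),
\]
we have \(r^2=u(1+\epsilon)^{1/2}\) and \(r=u^{1/2}(1+\epsilon)^{1/4}\). Expanding by Taylor to the required order gives
\[
 r^2=u-\ell+2a_2+2a_1u^{-1/2}+\frac{\ell^2}{u}\Bigl(\text{coefficient}\Bigr)+O(\ell^2/u),
\]
with only the terms of order \(\gtrsim u^{-1/2}\) needed exactly. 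The term \(\frac{r^4}{4}=\frac{\beta}{4}+x\) is exact, and it equals \(L+x-\frac{u\ell}{2}+a_2u+a_1u^{1/2}+\frac{\ell^2}{2}+\bigl(\frac56-2a_2\bigr)\ell+2a_2^2-a_2+a_0\).

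Next I expand the remaining pieces. For \(r^2\log r=\frac12 r^2\log r^2\), write \(\log r^2=\ell+\log(1+\delta)\) with \(\delta=r^2/u-1=-\ell/u+2a_2/u+O(u^{-3/2})\), so that
\[
 \frac12 r^2\log r^2=\frac12(u-\ell+2a_2+O(u^{-1/2}))\bigl(\ell-\ell/u+2a_2/u+O(\ell^2u^{-2}\cdot u)\bigr).
\]
This yields \(\frac{u\ell}{2}-\frac{\ell^2}{2}+a_2\ell-\frac{\ell}{2}+a_2+O(\ell u^{-1/2})\); the \(a_1u^{-1/2}\cdot u\ell\) cross term has to be tracked, since it is of size \(\ell u^{1/2}\cdot u^{-1}\), which is small. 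I must be careful here: \(u\cdot\delta\) contributes constants, and the \(\frac12\ell\cdot r^2\) term contributes \(a_1\ell u^{-1/2}\). Then \(-a_2r^2=-a_2u+a_2\ell-2a_2^2+O(u^{-1/2})\), \(-a_1r=-a_1u^{1/2}+O(\ell u^{-1/2})\), \(-\frac23\log r=-\frac13\ell+O(\ell/u)\), and finally \(-a_0\).

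Summing, the \(u\ell\), \(u\), \(u^{1/2}\), \(\ell^2\) and \(\ell\) coefficients and the constants should all cancel. For instance, the \(\ell\) terms give \(\frac56-2a_2+a_2-\frac12+a_2-\frac13=0\), and the constants give \(2a_2^2-a_2+a_0+a_2-2a_2^2-a_0=0\). This leaves \(L+x\) plus a remainder. The main obstacle is the bookkeeping of the remainder, which I want to show is \(O(\ell^3/\sqrt u)\). The worst terms come from the \(u^{1/2}\)-size correction inside \(\epsilon\) multiplying \(\ell\)-size quantities, together with the second-order Taylor terms \(u\,\epsilon^2\sim\ell^2/u\cdot u\). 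I will control these by writing every expansion with an explicit Lagrange remainder, using \(|\epsilon|\le C\ell/u\) uniformly for \(x\) in a compact set, which makes the error uniform in \(x\). It is safest to check the claimed bound by grouping terms of order \(u^{-1/2}\ell^k\) with \(k\le3\), which is where the stated \(\ell^3/\sqrt u\) appears to come from.
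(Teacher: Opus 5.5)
Your proof is correct and is essentially the paper's argument: expand the root of $\beta+4x$ about $u$, substitute into $\Phi$, and check that the coefficients of $u\ell$, $u$, $u^{1/2}$, $\ell^2$, $\ell$ and $1$ cancel. The one economy is that you keep $r^4/4=\beta/4+x$ exact, so you only need $r^2=u-\ell+2a_2+O(u^{-1/2})$ rather than the paper's expansion of $y_x=\sqrt{\beta+4x}$ through its $u^{-1}$ coefficient, and your bookkeeping in fact gives the sharper remainder $O(\ell/\sqrt u)$.

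When you write it out, correct the remainder in your second factor for $\log r^2$ to $O(u^{-3/2}+\ell^2u^{-2})$. As written, $O(\ell^2u^{-1})$ would leave an $O(\ell^2)$ error after multiplication by $r^2/2\approx u/2$.
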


\begin{proof}
Set \(s_x=\beta+4x\), \(y_x=\sqrt{s_x}\), and abbreviate
\(c=a_2\), \(a=a_1\), \(d=a_0\). Expanding
\(y_x=u\sqrt{1+(s_x-u^2)/u^2}\) through the terms that can contribute at
order one gives
\begin{align}
 y_x={}&u-\ell+2c+2a u^{-1/2}\notag\\
 &+\frac1u\left\{\frac12\ell^2+\left(\frac53-2c\right)\ell
        +2c^2-2c+2d+2x\right\}
 +O\!\left(\ell u^{-3/2}+\ell^3u^{-2}\right).
 \label{eq:yxexpansion}
\end{align}
We give the coefficient bookkeeping in detail.  Write
\[
 s_x=u^2+v_x,
\]
where, directly from \eqref{eq:betacompact},
\begin{align*}
 v_x={}&-2u\ell+4cu+4au^{1/2}+2\ell^2
 +\left(\frac{10}{3}-8c\right)\ell\\
 &+8c^2-4c+4d+4x.
\end{align*}
Then \(v_x=O(u\ell)\), uniformly for bounded \(x\).  The Taylor formula
\[
 \sqrt{u^2+v}=u+\frac{v}{2u}-\frac{v^2}{8u^3}
 +O\!\left(\frac{|v|^3}{u^5}\right),
 \qquad |v|\le\frac12u^2,
\]
shows first that
\[
 \frac{v_x}{2u}=-\ell+2c+2au^{-1/2}
 +\frac1u\left\{\ell^2+\left(\frac53-4c\right)\ell
 +4c^2-2c+2d+2x\right\}.
\]
Only the leading part \(-2u\ell+4cu\) of \(v_x\) contributes to
\(v_x^2/(8u^3)\) at order \(u^{-1}\).  The square term therefore contributes
\(-\tfrac12(\ell-2c)^2/u\); combining it with the constant line of
\eqref{eq:betacompact} yields the braces in \eqref{eq:yxexpansion}.

Write \(h=y_x-u\) and denote the expression in braces by \(e_x\). Since
\(L=u^2/4\), formula \eqref{eq:Phi} becomes
\begin{align}
 \Phi(\sqrt{y_x})-L
={}&\frac{uh}{2}+\frac{h^2}{4}
 +\frac{u+h}{2}\log(u+h)-c(u+h)\notag\\
 &-a\sqrt{u+h}-\frac13\log(u+h)-d.\label{eq:inversionsub}
\end{align}
Here \(h=O(\ell)\), so, uniformly for bounded \(x\),
\[
 \log(u+h)=\ell+\frac hu+O\!\left(\frac{\ell^2}{u^2}\right),
 \qquad
 \sqrt{u+h}=\sqrt u+\frac{h}{2\sqrt u}
 +O\!\left(\frac{\ell^2}{u^{3/2}}\right).
\]
In \eqref{eq:inversionsub}, the terms of order \(u\ell\) and \(u\) cancel
because the first correction in \(h\) is \(-\ell+2c\); the terms of order
\(\sqrt u\) cancel because the next correction is \(2a u^{-1/2}\). The
remaining order-one expression is
\[
 \frac{e_x}{2}-\frac{\ell^2}{4}
 +(c-\tfrac56)\ell-c^2+c-d=x.
\]
For reference, the cancellation occurs successively at the following
orders.  The correction \(-\ell\) removes the \(u\ell\) contribution,
\(2c\) removes the remaining order \(u\) term, and
\(2au^{-1/2}\) removes the order \(\sqrt u\) term.  The five terms in
\(e_x\) then cancel, respectively, the coefficients of \(\ell^2\),
\(\ell\), the constant depending on \(c\), the constant \(d\), and leave
exactly \(x\).  The remainders in the Taylor formulae and in
\eqref{eq:yxexpansion} are bounded by the error in
\eqref{eq:inversionresidual}.  Since \(\ell^3/\sqrt u\to0\), this error is
indeed \(o(1)\).
\end{proof}

\begin{proof}[Proof of \Cref{thm:explicit}]
By \Cref{thm:palmexpansion},
\begin{equation}\label{eq:PsiPhi}
 \Psi(r)=\Phi(r)+O(r^{-1/6}).
\end{equation}
The inlined expression in \eqref{eq:beta} is exactly
\(\beta(L_n)\) from \eqref{eq:betacompact}.
Apply \Cref{lem:inversion} with \(L=L_n\). Since
\((\beta_n+4x)^{1/4}\asymp L_n^{1/4}\), \eqref{eq:PsiPhi} and
\eqref{eq:inversionresidual} give
\begin{equation}\label{eq:betacheck}
 \Psi\bigl((\beta_n+4x)^{1/4}\bigr)=L_n+x+o(1).
\end{equation}
Apply \eqref{eq:movingargument} to the distribution functions in
\Cref{thm:main} at the physical threshold
\(n^{-1/2}(\beta_n+4x)^{1/4}\) and use \eqref{eq:betacheck}; this proves
\eqref{eq:fourth} without assuming uniformity in \(x\) in
\Cref{thm:main}.

To obtain the affine form, put
\(U_n=\sqrt n\,M_n(B)\) and
\(Y_n=(U_n^4-\beta_n)/4\). The fourth-power limit gives
\(Y_n\Longrightarrow\Gum\), hence \(Y_n=O_{\Pp}(1)\) and
\(U_n/\beta_n^{1/4}\to1\) in probability. Factoring the difference of
fourth powers therefore yields
\begin{align*}
 &\beta_n^{3/4}\bigl(U_n-\beta_n^{1/4}\bigr)\\
 &\quad=
 \frac{\beta_n^{3/4}\bigl(U_n^4-\beta_n\bigr)}
 {U_n^3+U_n^2\beta_n^{1/4}
  +U_n\beta_n^{1/2}+\beta_n^{3/4}}
 =Y_n+o_{\Pp}(1),
\end{align*}
which proves \eqref{eq:affine}.

Finally, \eqref{eq:yxexpansion} at \(x=0\) gives directly
\[
 \sqrt{\beta_n}
 =2\sqrt{L_n}-\log(2\sqrt{L_n})+2a_2+o(1)
 =2\sqrt{L_n}-\frac12\log L_n+4-\log(4\pi)+o(1),
\]
which is the deterministic expansion preceding
\eqref{eq:simplecentering}. Moreover,
\[
 nM_n(B)^2-\sqrt{\beta_n}
 =\frac{n^2M_n(B)^4-\beta_n}{nM_n(B)^2+\sqrt{\beta_n}}
 =o_{\Pp}(1),
\]
and combining the last two displays proves \eqref{eq:simplecentering}.
\end{proof}

\begin{proof}[Proof of \Cref{cor:kth}]
The event \(\{\Psi(\sqrt n\,M_{n,k})-L_n\le x\}\) says that at most \(k-1\) anchors
exceed the radius \(t_n(x)\), namely
\(N_n(t_n(x);B)\le k-1\). The count inequalities above give
\begin{align*}
 \{N_{n,\kappa_x(1+\varepsilon)}(B)\le k-1\}
 &\subset\{N_n(t_n(x);B)\le k-1\}\\
 &\subset\{N_{n,\kappa_x(1-\varepsilon)}(B)\le k-1\}.
\end{align*}
By \Cref{prop:LO}, the outside counts converge to Poisson variables with
means \(e^{-x}(1+\varepsilon)\) and \(e^{-x}(1-\varepsilon)\). Their
distribution functions at the integer \(k-1\) are continuous in the mean.
Letting \(\varepsilon\downarrow0\) gives the claimed formula.
\end{proof}

\appendix
\section{Compact uniformity in the disk-hole expansion}
\label{app:uniform}

We prove \Cref{lem:uniformdisk}. Throughout this appendix,
\[
 I=[t_-,t_+]\Subset(0,1),\qquad x=Nt^2,\qquad t\in I.
\]
Choose once and for all \(\epsilon\in(0,1/2)\) so that
\(t_+^2/(1-\epsilon)<1\), and set
\[
 j_-:=\left\lceil\frac{x}{1+\epsilon}\right\rceil,
 \qquad j_+:=\left\lfloor\frac{x}{1-\epsilon}\right\rfloor,
 \qquad
 \vartheta:=j_--\frac{x}{1+\epsilon}\in[0,1).
\]
For all sufficiently large \(N\), uniformly in \(t\in I\),
\(1\le j_-\le j_+\le N\). We use the exact decomposition
\begin{equation}\label{eq:threeblocks}
 \sum_{k=1}^N\log Q(k,x)=S_<+S_0+S_>,
\end{equation}
where
\[
 S_<:=\sum_{k=1}^{j_--1}\log Q(k,x),\quad
 S_0:=\sum_{k=j_-}^{j_+}\log Q(k,x),\quad
 S_>:=\sum_{k=j_++1}^{N}\log Q(k,x).
\]
In the proof of \cite[Theorem 1.7]{CharlierAnnuli}, specialised to
\(g=1,b=1,\alpha=0\), these are respectively the blocks \(S_3,S_4,S_5\).
Keeping \(\vartheta\) is essential: its terms cancel only after \(S_<\) and
\(S_0\) have been added.

We begin by isolating the elementary asymptotic input.  This also explains
why the expansions below remain uniform when the disk parameter varies in a
compact subset of the bulk.

\begin{lemma}[Uniform Barnes, Stirling, and trigamma expansions]
\label{lem:uniformelementary}
Let \(0<c<C<\infty\).  Uniformly for \(y\in[cN,CN]\),
\begin{align}
 \log G(y+1)
 &=\frac{y^2}{2}\log y-\frac{3y^2}{4}
 +\frac y2\log(2\pi)-\frac1{12}\log y
 +\zeta'(-1)+O_{c,C}(N^{-2}),\label{eq:uniformBarnes}\\
 \log\Gamma(y)
 &=\left(y-\frac12\right)\log y-y+\frac12\log(2\pi)
 +\frac1{12y}+O_{c,C}(N^{-3}),\label{eq:uniformStirling}\\
 \psi_1(y)
 &=\frac1y+\frac1{2y^2}+\frac1{6y^3}
 +O_{c,C}(N^{-5}).\label{eq:uniformtrigamma}
\end{align}
The bounds are unchanged if \(y\) is perturbed by a quantity in a fixed
bounded interval.
\end{lemma}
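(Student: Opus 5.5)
The plan is to deduce all three expansions from the classical large-argument asymptotic series of \(\log G\), \(\log\Gamma\) and \(\psi_1\), equipped with explicit remainder bounds. Uniformity in \(y\in[cN,CN]\) is then automatic, because each remainder is controlled by a negative power of \(y\), and \(y\ge cN\).

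\textbf{Stirling and trigamma.} For \(\log\Gamma\) I would use Binet's second formula,
\[
\log\Gamma(y)=\Bigl(y-\tfrac12\Bigr)\log y-y+\tfrac12\log(2\pi)+\int_0^\infty\Bigl(\frac1{e^t-1}-\frac1t+\frac12\Bigr)\frac{e^{-ty}}{t}\dd t .
\]
Expanding the bracket to the next order gives \(1/(12y)\) plus an error. The error is bounded by \(|B_4|/(12\,y^3)\), since the Stirling remainder is dominated by its first omitted term for real \(y>0\). This gives \eqref{eq:uniformStirling} with \(O(N^{-3})\) uniformly on \([cN,CN]\).

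For \(\psi_1\) I would use \(\psi_1(y)=\int_0^\infty te^{-ty}(1-e^{-t})^{-1}\dd t\). Expanding \(t/(1-e^{-t})=1+t/2+t^2/12+O(t^4)\) termwise gives \(1/y+1/(2y^2)+1/(6y^3)\). The \(t^3\) coefficient vanishes, so the first omitted term is \(-1/(30y^5)\). The standard enveloping (alternating Bernoulli) remainder bound then yields \(O(y^{-5})=O_{c,C}(N^{-5})\).

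\textbf{Barnes \(G\).} For \(\log G(y+1)\) I would invoke the known asymptotic expansion
\[
\log G(y+1)=\frac{y^2}{2}\log y-\frac{3y^2}{4}+\frac y2\log(2\pi)-\frac1{12}\log y+\zeta'(-1)+\sum_{k\ge1}\frac{B_{2k+2}}{4k(k+1)y^{2k}},
\]
whose remainder after truncation at \(k=0\) is \(O(y^{-2})\). This remainder can be obtained from an integral representation analogous to Binet's. Alternatively, one can differentiate twice: \((\log G(y+1))''\) is expressible through \(\psi\) and \(\psi_1\), using \(\log G(y+1)=\tfrac y2\log 2\pi-\tfrac{y(y+1)}2+y\log\Gamma(y+1)-\int_0^y\log\Gamma(t+1)\dd t\). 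One then integrates the Stirling bounds, fixing the constants by the known value \(\zeta'(-1)\). Either way the remainder is \(O(y^{-2})=O_{c,C}(N^{-2})\) uniformly on the interval. I expect this \(G\)-expansion to be the main obstacle, because it needs an honest uniform remainder rather than the usual formal series. The cleanest route is to cite a version with an explicit remainder bound, for instance from Ferreira–López or Nemes, valid for real \(y\ge1\).

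\textbf{Perturbation.} For the final assertion, replace \(y\) by \(y+\delta\) with \(|\delta|\le \delta_0\). For large \(N\), the point \(y+\delta\) still lies in \([c'N,C'N]\) with \(c'=c/2\) and \(C'=2C\). The remainder bounds hold there with constants depending only on \(c',C'\), so the same \(O\)-terms persist.
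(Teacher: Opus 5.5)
Your proposal is correct and follows the paper's proof. Both use the classical large-argument expansions of $\log G$, $\log\Gamma$ and $\psi_1$, with remainders bounded by constant multiples of $y^{-2}$, $y^{-3}$ and $y^{-5}$; uniformity comes from $y\ge cN$, and the perturbation is handled by enlarging the range to $[cN/2,2CN]$. Your Binet-type and Laplace-integral representations only make explicit the remainder bounds that the paper takes as standard.
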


\begin{proof}
These are the standard large-positive-argument expansions of the Barnes
\(G\)-function, the gamma function, and the trigamma function, with their
first omitted terms used as remainder bounds.  On the ray \(y\ge cN\), the
usual remainders are bounded respectively by constant multiples of
\(y^{-2}\), \(y^{-3}\), and \(y^{-5}\).  Replacing powers of \(y^{-1}\)
by powers of \(N^{-1}\) gives the displayed estimates.  If \(|h|\le C_0\),
then \(y+h\in[cN/2,2CN]\) for all sufficiently large \(N\), uniformly in
\(h\), which proves the final assertion.
\end{proof}

\begin{lemma}[The two outer blocks]\label{lem:outerblocks}
Uniformly for \(t\in I\),
\begin{equation}\label{eq:Sgreater}
 S_>=O_I(e^{-c_I N}).
\end{equation}
Moreover,
\begin{equation}\label{eq:Sless}
 S_<=F_1N^2+F_2N\log N+F_3N+F_5\log N+F_6
       +O_I\!\left(\frac{\log N}{N}\right),
\end{equation}
where
\begin{align*}
 F_1={}&-\frac{t^4}{4(1+\epsilon)^2}
       \bigl(1+4\epsilon-2\log(1+\epsilon)\bigr), \qquad  F_2=-\frac{t^2}{2(1+\epsilon)},\\
 F_3={}&\frac{t^2}{1+\epsilon}\left[
 (1-\vartheta)\epsilon+1-\log(t\sqrt{2\pi})
 +\epsilon\log\frac{\epsilon}{1+\epsilon}
 +(\vartheta-1)\log(1+\epsilon)\right],\\
 F_5={}&\frac7{12}-\frac{\vartheta}{2},\\
 F_6={}&\frac{-1+6\vartheta^2}{12}\log(1+\epsilon)-\frac1\epsilon
 +\frac{1-2\vartheta}{2}\log\epsilon+\frac76\log t
 +\frac12\log(2\pi)\\
 &\hspace{17mm}-\vartheta\log(t\sqrt{2\pi})-\zeta'(-1).
\end{align*}
\end{lemma}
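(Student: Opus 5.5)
The plan is to treat the two blocks separately, using only the gamma--Poisson identity \(Q(k,x)=\Pp\{\Pois(x)\le k-1\}\) and the uniform expansions of \Cref{lem:uniformelementary}.

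\textbf{The block \(S_>\).} For \(k>j_+\) we have \(k\ge x/(1-\epsilon)\), so \(k-1\) lies a fixed proportion above the mean \(x\). Chernoff's bound then gives
\[
 1-Q(k,x)\le\exp\{-x\,h(k/x)\},\qquad h(u)=u\log u-u+1,
\]
and \(h(k/x)\ge h(1/(1-\epsilon))>0\) uniformly. Since \(x\ge t_-^2N\), each term satisfies \(|\log Q(k,x)|\le 2(1-Q)\le Ce^{-c_IN}\). There are at most \(N\) terms, so \(S_>=O(Ne^{-c_IN})\), and a smaller \(c_I\) absorbs the factor \(N\). All constants depend only on \(t_\pm\) and \(\epsilon\).

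\textbf{The block \(S_<\), reduction to one term.} For \(k\le j_--1<x/(1+\epsilon)\) the Poisson lower tail is dominated by its top term. I would write
\[
 Q(k,x)=e^{-x}\frac{x^{k-1}}{(k-1)!}\sum_{m\ge0}\frac{(k-1)!}{(k-1-m)!\,x^m}
 =e^{-x}\frac{x^{k-1}}{(k-1)!}\Bigl(\frac{1}{1-(k-1)/x}+O\bigl(k/(x-k)^2\bigr)\Bigr).
\]
This gives
\[
 \log Q(k,x)=-x+(k-1)\log x-\log\Gamma(k)+\log\frac{x}{x-k+1}+O(k/x^2).
\]
The error sums to \(O(1/N)\) over the block.

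\textbf{The block \(S_<\), summing.} Summing \(k=1,\dots,j_--1\) gives three pieces:
\[
 -(j_--1)x,\qquad \tfrac12(j_--1)(j_--2)\log x,\qquad -\log G(j_-).
\]
The last comes from \(\sum_{k\le j_--1}\log\Gamma(k)=\log G(j_-)\). The remaining sum \(\sum\log\frac{x}{x-k+1}\) is an explicit Riemann-type sum in \(k/x\in[0,1/(1+\epsilon)]\). It can be handled by Euler--Maclaurin, which yields \(x\) times an integral plus endpoint terms. Alternatively it equals \(\log\Gamma(x+1)-\log\Gamma(x-j_-+2)+(j_--1)\log x\), and this form is exact and well suited to \eqref{eq:uniformStirling}.

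Then I substitute \(j_-=x/(1+\epsilon)+\vartheta\) and \(x=Nt^2\), and expand with \eqref{eq:uniformBarnes} and \eqref{eq:uniformStirling}. Their arguments are of order \(N\) up to bounded shifts, and the last sentence of \Cref{lem:uniformelementary} covers exactly such shifts. This uniformity in \(t\in I\) and \(\vartheta\in[0,1)\) is what yields the \(O_I(\log N/N)\) remainder.

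\textbf{Main obstacle.} The hard part is the bookkeeping, not the analysis. \(\vartheta\) must be kept exact, since it enters \(F_3\), \(F_5\) and \(F_6\) through quadratics such as \(\vartheta^2\log(1+\epsilon)\). One must also collect correctly the \(\log N\) terms coming from \(\log x=\log N+2\log t\), and the constant contributions \(\zeta'(-1)\), \(\tfrac12\log(2\pi)\) and \(-1/\epsilon\). I would organize the computation by powers: \(N^2\), \(N\log N\), \(N\), \(\log N\), \(1\). I would check \(F_1\) and \(F_2\) first against the continuum integral \(\int_0^{x/(1+\epsilon)}\bigl(-x+k\log x-k\log k+k\bigr)\dd k\), and then match the lower orders against Charlier's block \(S_3\).
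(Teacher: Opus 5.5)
Your treatment of \(S_>\) is correct and is the paper's argument. The gap is in \(S_<\), at the claim that the per-term error \(O(k/x^2)\) in your formula for \(\log Q(k,x)\) ``sums to \(O(1/N)\) over the block''. It does not. The block has \(m=j_--1\asymp x\) terms, and \(\sum_{k\le m}k/x^2=m(m+1)/(2x^2)\to\frac1{2(1+\epsilon)^2}\), which is of order one. Nor is your bound merely lossy. Write \(\frac{(k-1)!}{(k-1-\mu)!\,(k-1)^\mu}=\prod_{i<\mu}\bigl(1-\frac{i}{k-1}\bigr)\), with \(\mu\) your summation index, and expand. The next term in your bracket is then \(-\frac{(k-1)x}{(x-k+1)^3}\), so the piece of \(\log Q(k,x)\) you discard is \(-\frac{k-1}{(x-k+1)^2}+O(x^{-2})\). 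By a Riemann sum,
\[
 \sum_{k=1}^{m}\frac{k-1}{(x-k+1)^2}
 =\int_0^{1/(1+\epsilon)}\frac{u\dd u}{(1-u)^2}+O(N^{-1})
 =\frac1\epsilon+\log\frac{\epsilon}{1+\epsilon}+O(N^{-1}),
\]
and this is strictly positive for every \(\epsilon>0\). Your computation would therefore return a constant term that differs from \(F_6\) by exactly this amount. In particular, the term \(-1/\epsilon\) of \(F_6\) never appears; it is the term that must cancel the \(+1/\epsilon\) of \(E_6\). At order one, the Barnes and Stirling expansions contribute only logarithms of \(\epsilon\) and \(1+\epsilon\), never \(1/\epsilon\).

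The missing idea is to keep the second term of the large-deviation expansion and sum it exactly, so that only an \(O(x^{-2})\) remainder per term is left to be summed crudely. The paper does this with Charlier's uniform expansion \(Q(k,x)=\frac{x^ke^{-x}}{\Gamma(k)}\bigl(\frac1{x-k}-\frac{x}{(x-k)^3}+O(x^{-3})\bigr)\), which gives
\[
 \log Q(k,x)=k\log x-x-\log\Gamma(k)-\log(x-k)-\frac{x}{(x-k)^2}+O(x^{-2}).
\]
The correction sums exactly to \(-x\{\psi_1(x-m)-\psi_1(x)\}=-\frac1\epsilon+O(N^{-1})\). This is why the trigamma expansion \eqref{eq:uniformtrigamma} is part of \Cref{lem:uniformelementary}. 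Within your own series you can do the same: keep \(-\frac{(k-1)x}{(x-k+1)^3}\) in the bracket and write \(\frac{k-1}{(x-k+1)^2}=\frac{x}{(x-k+1)^2}-\frac1{x-k+1}\), which sums exactly to trigamma and digamma differences. There is also a sign slip in your exact identity. It should read \(\sum_{k=1}^{j_--1}\log\frac{x}{x-k+1}=(j_--1)\log x-\log\Gamma(x+1)+\log\Gamma(x-j_-+2)\); the Gamma difference is subtracted, not added.
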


\begin{proof}
For \(k\ge j_++1\), the gamma--Poisson identity and Chernoff's bound imply
\[
 0\le1-Q(k,x)=\Pp\{\Pois(x)\ge k\}\le e^{-c_\epsilon x}.
\]
Since \(x\ge t_-^2N\), and \(-\log(1-u)\le2u\) for \(0\le u\le1/2\),
summing at most \(N\) terms proves \eqref{eq:Sgreater}.

Put \(m=j_--1=x/(1+\epsilon)+\vartheta-1\). Since \(k\le m\) implies
\(x-k\ge c_\epsilon x\), the upper incomplete-gamma expansion
\cite[Lemma 5.1]{CharlierAnnuli} is uniform in this range and gives
\[
 Q(k,x)=\frac{x^ke^{-x}}{\Gamma(k)}
 \left(\frac1{x-k}-\frac{x}{(x-k)^3}+R_{k,x}\right),
 \qquad |R_{k,x}|\le C_\epsilon x^{-3}.
\]
Taking the logarithm therefore yields, uniformly for \(1\le k\le m\),
\begin{equation}\label{eq:logQouter}
 \log Q(k,x)=k\log x-x-\log\Gamma(k)-\log(x-k)
 -\frac{x}{(x-k)^2}+O_\epsilon(x^{-2}).
\end{equation}
Summing and using
\[
 \sum_{k=1}^m\log\Gamma(k)=\log G(m+1),\qquad
 \sum_{k=1}^m\log(x-k)=\log\Gamma(x)-\log\Gamma(x-m),
\]
\[
 \sum_{k=1}^m\frac1{(x-k)^2}=\psi_1(x-m)-\psi_1(x)
\]
gives
\begin{align}\label{eq:Slessclosed}
 S_<={}&\frac{m(m+1)}2\log x-mx-\log G(m+1)
 -\log\Gamma(x)+\log\Gamma(x-m)\notag\\
 &-x\{\psi_1(x-m)-\psi_1(x)\}+O_I(N^{-1}).
\end{align}
Here \(G\) is Barnes' \(G\)-function and \(\psi_1\) is the trigamma
function. Their arguments are bounded above and below by positive multiples
of \(N\), uniformly for \(t\in I\), so
\Cref{lem:uniformelementary} applies.  To make the moving-endpoint
bookkeeping explicit, put
\[
 \alpha=(1+\epsilon)^{-1},\qquad
 \delta=\vartheta-1\in[-1,0),\qquad m=\alpha x+\delta.
\]
Uniformly in \(\vartheta\in[0,1]\), Taylor expansion gives
\begin{align}
 \log m
 &=\log x-\log(1+\epsilon)
 +\frac{(1+\epsilon)\delta}{x}
 -\frac{(1+\epsilon)^2\delta^2}{2x^2}
 +O_\epsilon(x^{-3}),\label{eq:logmendpoint}\\
 \log(x-m)
 &=\log x+\log\frac{\epsilon}{1+\epsilon}
 -\frac{(1+\epsilon)\delta}{\epsilon x}
 -\frac{(1+\epsilon)^2\delta^2}{2\epsilon^2x^2}
 +O_\epsilon(x^{-3}).\label{eq:logxmendpoint}
\end{align}
Insert \eqref{eq:uniformBarnes}--\eqref{eq:uniformtrigamma} and
\eqref{eq:logmendpoint}--\eqref{eq:logxmendpoint} into
\eqref{eq:Slessclosed}.  Because \(x=Nt^2\) and \(t\in I\), every omitted
term is uniform in \(t\) and \(\vartheta\).  Collecting the coefficients of
\(N^2\), \(N\log N\), \(N\), \(\log N\), and \(1\) gives precisely
\(F_1,F_2,F_3,F_5,F_6\).  This is also the specialisation of
\cite[Lemma 5.2]{CharlierAnnuli}.
\end{proof}

\begin{lemma}[The transition block]\label{lem:transitionblock}
Let
\begin{align*}
 \mathcal A={}&\int_{-\infty}^0\log\!\left(\frac{\erfc y}{2}\right)\dd y+\int_0^\infty\left[\log\!\left(\frac{\erfc y}{2}\right)
 +y^2+\log y+\log(2\sqrt\pi)\right]\dd y.
\end{align*}
Uniformly for \(t\in I\),
\begin{equation}\label{eq:Szero}
 S_0=E_1N^2+E_2N\log N+E_3N+E_4\sqrt N+E_5\log N+E_6
       +O_I(N^{-1/12}),
\end{equation}
where
\begin{align*}
 E_1={}&\frac{t^4}{4(1+\epsilon)^2}
       \bigl(2\epsilon-\epsilon^2-2\log(1+\epsilon)\bigr), \qquad E_2=-\frac{t^2\epsilon}{2(1+\epsilon)},\\
 E_3={}&\frac{t^2}{1+\epsilon}\left[
 (1+\epsilon-\vartheta)\log(1+\epsilon)
 +\epsilon\vartheta-\epsilon\log(\epsilon t\sqrt{2\pi})\right], \qquad
 E_4=\sqrt2\,t\mathcal A=a_1t,\\
 E_5={}&\frac{2\vartheta-1}{4}, \qquad  E_6=\frac{1-6\vartheta^2}{12}\log(1+\epsilon)+\frac1\epsilon
 +\frac{2\vartheta-1}{2}\log(\epsilon t\sqrt{2\pi})-2J_--2J_+.
\end{align*}
\end{lemma}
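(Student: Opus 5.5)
We will follow the structure of the corresponding block \(S_4\) in the proof of \cite[Theorem 1.7]{CharlierAnnuli} and track uniformity in \(t\in I\) and \(\vartheta\in[0,1)\) at every step. First split \(S_0\) at the two thresholds \(|k-x|\le M:=x^{\sigma}\) (with some \(\sigma\in(1/2,2/3)\), to be tuned so that the final error is \(N^{-1/12}\)) and \(M<|k-x|\le\epsilon x/(1\pm\epsilon)\). We parametrise \(k=x+\sqrt{2x}\,y\), and use the uniform Temme expansion of the normalised incomplete gamma function,
\[
 Q(k,x)=\tfrac12\erfc\!\bigl(-\eta\sqrt{k/2}\bigr)+R_k(\eta),\qquad \tfrac12\eta^2=\tfrac{x}{k}-1-\log\tfrac{x}{k},
\]
with \(\eta\) sign-matched to \(x/k-1\). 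Here \(R_k\) is given by its standard asymptotic series in \(k^{-1/2}\), with coefficients analytic in \(\eta\) on a fixed neighbourhood of \([-C\epsilon,C\epsilon]\). Since \(x\asymp N\) uniformly and \(\epsilon\) is fixed, all constants in Temme's remainder are uniform.

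\textbf{Step one: the region \(k\le x-M\).} Here the argument of \(\erfc\) is large and positive after a sign change, so the Poisson lower tail is small. We use the large-argument form \eqref{eq:erfctail} combined with the Temme correction terms. Summing \(\log Q\) over this range by Euler--Maclaurin in \(k\) produces:
\begin{itemize}
\item the polynomial and \(N\log N\) pieces that combine with \(F_1,F_2,F_3\);
\item the \(\vartheta\)-dependent boundary terms at \(k=j_-\), which give \(E_5,E_6\) and the \(\vartheta\)-parts of \(E_3\) through the Euler--Maclaurin first-order term \((\vartheta-\tfrac12)f(j_-)\);
\item the boundary contribution at \(k=x-M\), which must cancel against the inner region.
\end{itemize}

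\textbf{Step two: the region \(k\ge x+M\).} Here \(\log Q(k,x)\) is exponentially small, so this part is \(O(e^{-cM^2/x})\), uniformly.

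\textbf{Step three: the window \(|k-x|\le M\).} Expand \(\eta\sqrt{k/2}=-y+c_1y^2/\sqrt x+c_2y^3/x+\cdots\) and write \(\log Q\) as
\[
 \log\bigl(\tfrac12\erfc y\bigr)+x^{-1/2}g_1(y)+x^{-1}g_2(y)+O(x^{-3/2}\mathrm{poly}(y)).
\]
The coefficients \(g_1,g_2\) are explicit in \(e^{-y^2}/\erfc y\); the term \(e^{-y^2}(1-5y^2)/(3\sqrt\pi\erfc y)\) in \eqref{eq:Jminus} is exactly such a correction. The sum over \(k\) then becomes a Riemann sum with step \(1/\sqrt{2x}\), which we convert by Euler--Maclaurin (the lattice offset contributes only to the \(\vartheta\)-free terms here, since \(x\) need not be an integer but midpoint corrections are \(O(x^{-1/2})\) times boundary values). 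This yields \(\sqrt{2x}\int\log(\tfrac12\erfc y)\,dy\), together with the first-moment integrals \(\int 2y\log(\cdot)\) coming from the Jacobian/parametrisation shift.

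\textbf{Step four: matching.} Each integrand is renormalised at \(+\infty\) by subtracting its large-\(y\) asymptotics. These are \(y^2+\log y+\log(2\sqrt\pi)\) at order \(\sqrt x\), and \(\tfrac{11}{3}y^3+2y\log y+(\tfrac12+2\log 2\sqrt\pi)y\) at order one. The subtracted polynomial pieces, integrated up to \(y=M/\sqrt{2x}\), exactly cancel the divergent boundary terms from Step one. What survives is \(\sqrt{2x}\,\mathcal A=a_1t\sqrt N\) and \(-2J_--2J_+\). The \(1/\epsilon\) and \(\log(\epsilon t\sqrt{2\pi})\) pieces of \(E_6\) arise from evaluating the outer-region asymptotics at the endpoint \(k=j_+\).

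\textbf{Main obstacle.} The hardest step is the matching in Step four. We must check that the truncated tails, \(\int_{M/\sqrt{2x}}^\infty O(y^{-2})\sqrt x\,dy\) and \(\int O(y^{-3})\,dy\), and the Temme remainders over \(|k-x|\le M\), sum to \(O(x^{-1/12})\). The competing errors are:
\begin{itemize}
\item truncation: \(\sqrt x\cdot\sqrt x/M=x/M\);
\item Taylor remainder: \(M\cdot M^4/x^{5/2}\) from cubic-in-\(y\) terms.
\end{itemize}
Balancing these forces \(\sigma\) near \(7/12\), which produces the \(N^{-1/12}\) rate. Every constant involved is uniform on \(I\) because \(x/N\in[t_-^2,t_+^2]\), so the only work beyond Charlier's fixed-\(t\) computation is verifying that no remainder depends on \(t\) except through \(x\), and that the \(\vartheta\)-terms are bounded uniformly in \(\vartheta\in[0,1)\).
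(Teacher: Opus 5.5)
Your architecture is the paper's, which follows Charlier's Lemma~3.26. You use a window \(|k-x|\lesssim x^{7/12}\), which is the paper's \(|v_k|\le M=N^{1/12}\) with \(v_k=\sqrt N(x/k-1)\). Inside it you use the uniform incomplete-gamma expansion; below it a large-deviation expansion; above it an exponentially small tail. Uniformity comes from \(k/N\) staying in a fixed compact subinterval of \((0,1)\).

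The gap is in the error budget, which is exactly what this lemma adds to Charlier's fixed-\(t\) statement. By your own formulas, at \(M=x^{7/12}\) the truncation term \(x/M\) and the Taylor term \(M^5/x^{5/2}\) both equal \(x^{5/12}\to\infty\). Your balancing therefore does not produce \(N^{-1/12}\).

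The problem is conceptual. After subtracting only \(y^2+\log y+\log(2\sqrt\pi)\), the \(\sqrt x\)-level integrand is \(-\tfrac1{2y^2}+\tfrac5{8y^4}+O(y^{-6})\). Its part beyond the cutoff, weighted by the summand density \(\sqrt{2x}\), is of order \(x/M\). This is not an error term to be made small. It cancels against the \(-x/(x-k)^2\) term of the large-deviation expansion of \(\log Q(k,x)\), whose sum over \(M\le x-k\le \epsilon x/(1+\epsilon)\) is approximately \(-x/M+(1+\epsilon)/\epsilon\). The second piece is where the \(1/\epsilon\) in \(E_6\) comes from.

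So the matching in your Step four has to be carried through several orders. In the intermediate range, the two-term expansion used for \(S_<\) is not enough, since its \(O(x^{-2})\) error requires \(x-k\ge cx\). At \(x-k\asymp x^{7/12}\), the terms \(x^j/(x-k)^{2j}\) for \(j=1,2,3\) sum to sizes \(x^{5/12}\), \(x^{1/4}\), \(x^{1/12}\). They must be kept and cancelled against the \(y^{-2},y^{-4},y^{-6}\) terms of \(\log(\tfrac12\erfc y)\) in the window. Only the first omitted term, of total size \(x^4/M^7\), is a genuine error.

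The paper's budget has \(\sqrt N/M^7\) and \(M^5/\sqrt N\) for the two matching ranges, with \(M\) measured in \(v_k\). In your \(k\)-scale these read \(x^4/M^7\) and \(M^5/x^3\). They balance at the same \(M=x^{7/12}\), but at the value \(x^{-1/12}\). The central window then costs \(O_I(M\sqrt N)\) summands times \(C_I(1+|v_k|^8)N^{-3/2}\), which is \(N^{-1/4}\), and the renormalized order-one tail costs \(N^{-1/6}\).

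Two smaller corrections. First, the \(1/\epsilon\) and \(\log(\epsilon t\sqrt{2\pi})\) pieces of \(E_6\) come from the lower endpoint \(j_-\), not from \(j_+\), where every summand is exponentially small. Second, your signs are inverted. With \(k=x+\sqrt{2x}\,y\) one has \(Q(k,x)\approx\tfrac12\erfc(-y)\), and Temme's expansion reads \(Q=\tfrac12\erfc(\eta\sqrt{k/2})+R_k\) with \(\eta\) of the sign of \(x/k-1\). This affects the signs of the first-moment integrals entering \(J_\pm\).
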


\begin{proof}
This is the \(k=2,b=1,\alpha=0,r_k=t\) specialisation of the final
transition-block formula in \cite[Lemma 3.26]{CharlierAnnuli}. We verify the
only point not explicit in that fixed-parameter statement: uniformity of its
remainder for \(t\in I\).

Set \(M=N^{1/12}\) and introduce
\[
 g_-:=\left\lceil\frac{x}{1+M/\sqrt N}\right\rceil,
 \qquad
 g_+:=\left\lfloor\frac{x}{1-M/\sqrt N}\right\rfloor.
\]
The transition block is split exactly into the sums over
\([j_-,g_--1]\), \([g_-,g_+]\), and \([g_++1,j_+]\), as in
\cite[(3.38)--(3.39)]{CharlierAnnuli}. In the middle sum put
\(v_k=\sqrt N(x/k-1)\). The uniform incomplete-gamma expansion in
\cite[Lemmas 2.3--2.4]{CharlierAnnuli}, differentiated there by analyticity
and Cauchy's formula, gives the central summand expansions used in
\cite[Lemmas 3.11 and 3.14]{CharlierAnnuli}; in particular,
\cite[(3.46)--(3.52)]{CharlierAnnuli} records the relevant terms. Their
pointwise remainder is bounded by
\[
 C_I(1+|v_k|^8)N^{-3/2},\qquad |v_k|\le M.
\]
The mesh and the number of summands can be tracked explicitly. Throughout
the central window,
\[
 \frac{k}{N}\in
 \left[
 \frac{t_-^2}{1+\epsilon}+O(N^{-1}),
 \frac{t_+^2}{1-\epsilon}+O(N^{-1})
 \right]\Subset(0,1),
\]
and
\[
 |v_{k+1}-v_k|
 =\frac{\sqrt N\,x}{k(k+1)}\asymp_I N^{-1/2}.
\]
There are therefore \(O_I(M\sqrt N)\) central summands. Summing the
pointwise polynomial remainder gives
\[
 O_I\!\left(N^{-3/2}\sum_{|v_k|\le M}(1+|v_k|^8)\right)
 =O_I(M^9/N)=O_I(N^{-1/4}).
\]

For the two matching sums, the estimates assembled in
\cite[Lemmas 3.16, 3.19--3.20 and 3.24--3.26]{CharlierAnnuli}, after
subtraction of the
polynomial and logarithmic tails of \(\log(\erfc y/2)\), give
\begin{equation}\label{eq:matchingerror}
 O_t\!\left(\frac{M^5}{\sqrt N}\right)
 +O_t\!\left(\frac{\sqrt N}{M^7}\right).
\end{equation}
The constants are uniform on \(I\): throughout those proofs, \(k/N\) stays
in the fixed compact interval
\([t_-^2/(1+\epsilon),t_+^2/(1-\epsilon)]\Subset(0,1)\); all fractional endpoint
parameters lie in \([0,1]\); and all coefficients and all finitely many
derivative majorants are continuous functions of \(t,t^{-1}\), and their
logarithms on \(I\). The complementary-error-function ratios are uniformly
controlled after the rescaling \(y=tv/\sqrt2\). More explicitly,
denominators involving
\(k/N\) and \(1-k/N\) are bounded below by
\(t_-^2/(1+\epsilon)\) and
\(1-t_+^2/(1-\epsilon)\), respectively. Denominators involving
\(x/k-1\) are absorbed into the transition coordinate \(v\) and are
controlled by the displayed polynomial majorants.  The rescaling
\(y=tv/\sqrt2\) is uniformly bi-Lipschitz on \(I\).  The standard
Mills-ratio estimates
\[
 e^{y^2}\erfc(y)\asymp(1+y)^{-1},\qquad y\ge0,
\]
together with their differentiated forms, provide uniform polynomial
majorants for the complementary-error-function quotients occurring in the
Euler--Maclaurin remainders. Thus every \(O_t\)-constant in the cited finite
list has bounded supremum on \(I\).

For clarity, the complete transition error budget is
\[
\begin{array}{c|c|c}
 \text{source}&\text{bound for general }M&\text{bound for }M=N^{1/12}\\ \hline
 \text{central summands}&M^9/N&N^{-1/4}\\
 \text{first matching range}&M^5/\sqrt N&N^{-1/12}\\
 \text{second matching range}&\sqrt N/M^7&N^{-1/12}\\
 \text{renormalized tail}&M^{-2}&N^{-1/6}\\
 \text{large-deviation tail}&Ne^{-cM^2}&Ne^{-cN^{1/6}}.
\end{array}
\]
Thus \(N^{-1/12}\) is the dominant error. Specialising the coefficients in
\cite[Lemma 3.26]{CharlierAnnuli} yields
the displayed \(E_1,\ldots,E_6\).
\end{proof}

\begin{proof}[Completion of the proof of \Cref{lem:uniformdisk}]
Add the three blocks in \eqref{eq:threeblocks}. Since \(S_>\) is
exponentially small, \Cref{lem:outerblocks,lem:transitionblock} give
\[
 \sum_{k=1}^{N}\log Q(k,Nt^2)
 =\sum_{\nu\in\{1,2,3,5,6\}}(F_\nu+E_\nu)\,\Xi_\nu
   +E_4\sqrt N+O_I(N^{-1/12}),
\]
where \(\Xi_1=N^2\), \(\Xi_2=N\log N\), \(\Xi_3=N\),
\(\Xi_5=\log N\), and \(\Xi_6=1\). Direct algebra gives
\begin{align*}
 F_1+E_1&=-\frac{t^4}{4},
 &F_2+E_2&=-\frac{t^2}{2},\\
 F_3+E_3&=t^2\bigl(1-\log(t\sqrt{2\pi})\bigr),
 &E_4&=a_1t,\\
 F_5+E_5&=\frac13,
 &F_6+E_6&=\frac23\log t+a_0.
\end{align*}
For the constant term, the cancellation can be seen without suppressing any
endpoint contribution. The coefficients of \(\log(1+\epsilon)\),
\(1/\epsilon\), and \(\log\epsilon\) cancel separately. The remaining
logarithms equal
\[
 \frac76\log t+\frac12\log(2\pi)
 -\frac12\log(t\sqrt{2\pi})=\frac23\log t+\frac14\log(2\pi),
\]
and the remaining constants are \(-\zeta'(-1)-2J_--2J_+\). Thus every
occurrence of the auxiliary cutoff \(\epsilon\) and of the moving-endpoint
fraction \(\vartheta\) cancels identically. Substituting the six sums above
gives exactly \eqref{eq:uniformdisk}, uniformly for \(t\in I\).

Finally, the integrals defining \(a_1,J_-,J_+\) are absolutely convergent.
The cancellations can be checked directly. As \(y\to+\infty\),
\begin{align*}
 \log\!\left(\frac{\erfc y}{2}\right)
 &=-y^2-\log y-\log(2\sqrt\pi)
   -\frac1{2y^2}+\frac5{8y^4}+O(y^{-6}),\\
 \frac{e^{-y^2}(1-5y^2)}{3\sqrt\pi\,\erfc y}
 &=-\frac53y^3-\frac12y+\frac1y-\frac9{4y^3}
   +O(y^{-5}).
\end{align*}
Consequently, after the counterterms in
\eqref{eq:a1}--\eqref{eq:Jplus} are subtracted, the positive-half-line
integrands are respectively \(-\tfrac12y^{-2}+O(y^{-4})\) and
\(-y^{-3}+O(y^{-5})\). At \(-\infty\),
\[
 \frac{\erfc y}{2}=1+O\!\left(\frac{e^{-y^2}}{|y|}\right),
\]
so the negative-half-line integrands decay exponentially. Near zero, the
only nonsmooth terms are constant multiples of \(\log y\) and
\(y\log y\), both integrable. This proves absolute convergence.
\end{proof}

\section*{Acknowledgements}

P.M. was supported by the Swedish Foundation for International Cooperation
in Research and Higher Education (PD2023-9315).

\section*{Statement on the use of artificial intelligence}

In the course of the research presented here I have been using AI tools
extensively, most significantly ChatGPT Pro 5.5 and ChatGPT 5.6 Sol Ultra,
for ideation, technical help, editing and checking the proofs, and general
editing and proofreading of the manuscript, at the level of a leading co-author. All mistakes are my own responsibility.

\end{document}